\author{Stephen Bedford}
\title{Analysis of local minima for constrained minimization problems}
\theoremstyle{plain}
\newtheorem{theorem}{Theorem}
\newtheorem{proposition}[theorem]{Proposition}
\newtheorem{definition}[theorem]{Definition}
\newtheorem{lemma}[theorem]{Lemma}
\newtheorem{corollary}[theorem]{Corollary}
\newtheorem{remark}[theorem]{Remark}
\newtheorem*{theorem*}{Theorem}
\newtheorem*{proposition*}{Proposition}
\newtheorem*{definition*}{Definition}
\newtheorem*{lemma*}{Lemma}
\newtheorem*{corollary*}{Corollary}
\newtheorem*{remark*}{Remark}
\newcommand{\vn}{{\bf n}}
\newcommand{\vv}{{\bf v}}
\newcommand{\vm}{{\bf m}}
\newcommand{\vw}{{\bf w}}
\newcommand{\vphi}{\boldsymbol\phi}
\newcommand{\vpsi}{\boldsymbol\psi}
\newcommand{\vQ}{{\bf Q}}
\renewenvironment{proof}{{\bf{Proof}\vspace{3 mm}}}{\qed}
\begin{document}

\maketitle

\begin{abstract}
We consider vectorial problems in the calculus of variations with an additional pointwise constraint. Our admissible mappings $\vn:\mathbb{R}^k\rightarrow \mathbb{R}^d$ satisfy $\vn(x)\in M$, where $M$ is a manifold embedded in Euclidean space. The main results of the paper all formulate necessary or sufficient conditions for a given mapping $\vn$ to be a weak or strong local minimizer. Our methods involve using projection mappings in order to build on existing, unconstrained, local minimizer results. We apply our results to a liquid crystal variational problem to quantify the stability of the unwound cholesteric state under frustrated boundary conditions.

\end{abstract}

%
%
%
%
%
%

\section{Introduction}\label{Introduction}

The canonical problem in the vectorial calculus of variations is to minimize the integral functional
\begin{equation}\label{0.1}
 I(\vn)=\int_\Omega F(x,\vn(x),\nabla\vn(x))\,dx
\end{equation}
where $\vn:\mathbb{R}^d\rightarrow \mathbb{R}^k$ lies in an appropriate Sobolev space together with some boundary conditions. Global minimizers for this problem exist provided that the Lagrangian $F$ satisfies certain quasiconvexity and growth conditions (see Dacorogna \cite{dacorogna2008direct} for a summary). For local minimizers much depends on which metric is used to measure locality. To formulate necessary and sufficient conditions for strong and weak local minimizers the first and second variations of the functional $I$ are considered, just as for finite-dimensional functions. Such local results were first proved by, for example, Graves \cite{graves1939weierstrass}, van Hove \cite{van1949signe}, Meyers \cite{meyers1965quasi} and Ball \& Marsden \cite{ball1986quasiconvexity}. A sufficiency result for strong local minimizers has proved to be the trickiest local result to derive, but the recent paper of Grabovsky \& Mengesha \cite{grabovsky2009sufficient} has shed new light on the subject. 

\vspace{3 mm}

However these classical results do not apply directly for variational problems with pointwise constraints. For example, in the study of liquid crystals, a continuum theory was proposed by Oseen \cite{oseen1933theory} in 1933 and expanded upon by Frank \cite{frank1958liquid} in later decades. They derived a continuum theory from a coarse-grained approach using the mean orientation $\vn(x)$ of the rod-like molecules as a macroscopic variable, in terms of which the energy of the system is defined. As a result, their problem was in the same form as \eqref{0.1} with the additional constraint that $\vn(x)\in \mathbb{S}^2$. The critical issue when working with constrained variational problems is what variations are admissible. For example, the standard unconstrained first variation is of the form
\begin{equation}\label{0.2}
 \left. \frac{d}{d\epsilon} I(\vn+\epsilon\vphi)\right|_{\epsilon=0}.
\end{equation}
However in the constrained case $\vn+\epsilon\vphi$ will not, in general, satisfy the constraint. This is why local minimizer results proved in the unconstrained case do not apply when there is a constraint. The focus of this paper is to address this issue by proving analogues of the unconstrained local minimizer results for a wide variety of constrained problems. These results are then applicable in the study of liquid crystals, micromagnetics and other constrained variational problems, providing the constraint is of sufficient regularity.

\vspace{3 mm}

In research on liquid crystals, designing multi-stable, power efficient, devices is one of the core aims in the industry, but the relevant stability questions are not usually considered within a precise mathematical framework. Hence local stability is typically approached through a linearization argument and phase plane analysis \cite{da2007mathematical}, or a study of the Euler-Lagrange equations \cite{davidson2002flexoelectric}, or simply through experimental data. This work supplements such analysis by providing a carefully formulated and rigorous mathematical structure in which to operate. In section \ref{sec:applications} we illustrate this by proving a new stability result for a widely studied liquid crystal problem. The one-constant Oseen-Frank energy for cholesterics is given by
\begin{equation}\label{0.2b}
 I(\vn)=K\int_\Omega |\nabla \vn|^2+2t\vn\cdot\nabla\times \vn+t^2\,dx,
\end{equation}
where $K,t>0$. The domain $\Omega$ is a cuboid, and we impose $\vn(x)={\bf e}_3$, on the top and bottom faces of $\Omega$. If the height of the cuboid is $d$, we show that $t=\frac{\pi}{d}$ is the critical value for the stability of the constant state; it is unstable if $t>\frac{\pi}{d}$ and stable if $t<\frac{\pi}{d}$. This prediction is consistent with the experimental data of Gartland et al.  \cite{gartland2010electric} once the relative sizes of their elastic constants $K_i$ are accounted for.

\vspace{3 mm}

More generally the minimization problem \eqref{0.1} takes on a greater level of complexity if the unknown $\vn:N\rightarrow M$ is a mapping between two manifolds $N\subset \mathbb{R}^d$ and $M\subset \mathbb{R}^k$. In this situation the corresponding Sobolev spaces are defined as
\begin{equation}\label{0.3}
 W^{1,p} \left( N,M\right) :=\left\{\left. \vn\in W^{1,p}\left( N,\mathbb{R}^k\right) \,\right|\, \vn(x)\in M\,\,\text{a.e.}\,\right\}.
\end{equation}
However, in this setting some of the standard notions about Sobolev spaces for mappings between Euclidean spaces do not hold. For example, Bethuel \& Zheng \cite{bethuel1988density} proved that depending on the topology of the manifold $M$, smooth maps are not always dense in $W^{1,p}\left( N,M \right)$. A standard example using these spaces is that of harmonic maps, where the object is to minimize
\begin{equation}\label{0.4}
  E(\vphi)=\int_N ||D\vphi||^2\,d\mu_N,
\end{equation}
where $d\mu_N$ is the measure on the manifold $N$ and we have the constraint $\vphi(x)\in M$ almost everywhere. The second variation of this functional was derived by Smith \cite{smith1975second} and has interesting properties. Xin \cite{xin1980some} showed that if $k>1$ then there is no non-constant stable harmonic map from $\mathbb{S}^{k-1}$ to any Riemannian manifold and there are many other related results \cite{leung1982stability,shen2004second}. Urakawa \cite{urakawa1993calculus} has a good overview of a number of such instability theorems. These results all emphasise that a constrained variational problem must be carefully considered in its own right, since its properties can be very different to its unconstrained analogue.

\vspace{3 mm}

In this paper we will explore some middle ground between the study of harmonic maps and the standard vectorial minimization problems. We restrict our domains to be bounded Lipschitz domains in $\mathbb{R}^d$, but we consider a much wider class of Lagrangians than in harmonic maps. The motivation for this work came from $\mathbb{S}^2$ valued maps, but we generalize our constraint to $\vn(x)\in M$ where $M\subset \mathbb{R}^k$ is a manifold of sufficient regularity. The regularity we impose on $M$ is necessary to ensure that the nearest point projection map $P:\mathbb{R}^k \rightarrow M$ has an appropriate differentiability. We need the regularity of $P$ because the first variation we will be using in this paper is of the form
\begin{equation}\label{0.5}
 \left. \frac{d}{d\epsilon} I(P(\vn+\epsilon\vphi))\right|_{\epsilon=0}.
\end{equation}
An advantage of this style of variation is that the test functions themselves are unconstrained. This makes the problem more accessible to numerical and computational methods for investigating stability.

\vspace{3 mm}

The paper contains a number of results, each of which either proves necessary or sufficient conditions for a given state to be a local minimizer. In section \ref{sec:weak necessary} we begin by establishing necessary conditions for a weak local minimizer. The first result, Theorem \ref{Weak Necessary}, is the usual result that a weak local minimizer must have a vanishing first variation and non-negative second variation. The second necessary condition for a weak local minimizer is a Legendre condition which is slightly modified from the standard version due to the curvature of the target manifold.

\vspace{3 mm}

From there we prove the weak sufficiency theorem, Theorem \ref{Weak sufficient}, that a vanishing first variation and strictly positive second variation yield a strict weak local minimizer. In section \ref{sec:strong necessary} we examine the extra necessary conditions which a strong local minimizer satisfies: quasiconvexity in the interior, at the boundary, and the Weierstrass condition. Then we establish two strong sufficiency results, Theorems \ref{Strong sufficient Taheri} and \ref{Strong sufficiency}, with different sets of assumptions. The first uses Taheri's result \cite[Theorem 3.3]{taheri2001sufficiency} with a pointwise Weierstrass condition, while the second uses Grabovsky \& Mengesha's result \cite{grabovsky2009sufficient}, which is much more technical, but also more general. We finish with two applications of our results. In the first we apply our theorems to the liquid crystal problem outlined above to establish the stability of the unwound cholesteric state as the twist parameter changes. The 
second illustrates how the study of global minimizers is very different with the addition of a constraint. We study the Dirichlet energy
\begin{equation}\label{0.6}
 I(\vn)=\int_0^1 |\vn_x|^2\,dx
\end{equation}
for $\vn:(0,1)\rightarrow \mathbb{S}^1$, and find infinitely many strong local minimizers which are not global minimizers. This cannot be the case without the constraint because in that situation the convexity of the Lagrangian ensures that the Euler-Lagrange equation has a unique solution.

%
%
%
%
%
%

\section{Preliminaries and notation}\label{sec:prelims}

Throughout this paper, unless stated otherwise, $\Omega \subset \mathbb{R}^d$ denotes a bounded Lipschitz domain with boundary $\partial\Omega$. We suppose that the boundary has two disjoint, relatively open, measurable components $\partial\Omega_1$ and $\partial\Omega_2$ such that $\partial\Omega = \overline{\partial\Omega_1}\cup\overline{\partial\Omega_2}$. We will also be supposing throughout that $M\subset \mathbb{R}^k$ is a closed, bounded manifold of class $C^4$. The embedding theorems of Whitney and Nash ensure that these regularity assumptions can also apply to a wide array of abstract manifolds when embedded in Euclidean spaces. For $1\leqslant p \leqslant \infty$ we define the Sobolev space $W^{1,p}\left( \Omega,M\right)$ as 
\begin{equation}\label{1.0}
 W^{1,p}\left( \Omega,M\right):=\left\{\left.\, \vn\in W^{1,p}\left(\Omega,\mathbb{R}^k\right)\,\right|\, \vn(x)\in M\,\, \text{a.e.}\,\right\}.
\end{equation}
Then we consider the problem of minimizing 
\begin{equation}\label{1.1}
 I(\vn)=\int_\Omega F(x,\vn(x),\nabla\vn(x))\,dx,
\end{equation}
over the set of admissible mappings
\begin{equation}\label{1.2}
\mathcal{A}:=\left.\left\{\, \vn\in W^{1,1}\left(\Omega,M\right)\,\right|\,\vn=\vn_0\,\,\text{on}\,\,\partial\Omega_1\, \right\},
\end{equation}
where $k,d\in\mathbb{N}$ and $\vn_0 \in W^{1,1} \left( \Omega, M \right)$. We denote the corresponding set of test functions, or variations, by 
\begin{equation}\label{1.3}
\text{Var}_{\mathcal{A}}:=\left\{\,\left. \vv \in C^{\infty} \left({\Omega},\mathbb{R}^k\right)\,\right|\,\vv=0\,\,\text{on}\,\,\partial\Omega_1\,\right\}.
\end{equation}
We assume (unless stated otherwise) that 
\begin{itemize}
 \item $F \in C\left( \overline{\Omega}\times M\times\mathbb{R}^{k\times d}\right)$
 \item There exists some open neighbourhood $O$ of $M$ such that for every $x\in \Omega$,
\begin{equation}
F(x,\cdot,\cdot) \in C^2 \left(O\times \mathbb{R}^{k \times d}\right).
\end{equation}
\end{itemize}
These assumptions are sufficient to ensure that the second variation is well defined. In order to prove our results we will study unconstrained functionals which are related to $I$, so that we can apply standard results to them. However in order to do this we need a result about the regularity of the nearest point projection map. We remind the reader that a manifold $M\subset \mathbb{R}^k$ is a $C^r$-manifold, of codimension d, around some point $x\in M$, if there exists some open set $V\subset \mathbb{R}^k$ and $C^r$ function $F:V\rightarrow \mathbb{R}^d$ such that
\begin{equation}\label{1.4}
 M\cap V =\left\{ x \in V\,|\,F(x)=0\right\}.
\end{equation}

\begin{lemma}\cite[Lemma 4]{lewis2008alternating}\label{projectionregularity}
 Let ${M} \subset \mathbb{R}^k$ be a manifold of class $C^r$ (with $r\geqslant 2$) around some $x\in M$. Then there exists a $\delta>0$ such that the nearest point projection $P:B(x,\delta)\rightarrow {M}$ is unique and of class $C^{r-1}$.
\end{lemma}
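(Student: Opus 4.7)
The plan is to apply the inverse function theorem to a suitably constructed auxiliary map. Since $M$ is a $C^r$ manifold of codimension $d$ around $x$, I would start by taking the defining function $F:V\rightarrow\mathbb{R}^d$ from \eqref{1.4}, noting that the rank of $\nabla F$ is $d$ on $M\cap V$ (shrinking $V$ if necessary). The nearest point projection $P(y)$ of a point $y$ close to $x$ is characterized by two conditions: first, $P(y)\in M$, i.e.\ $F(P(y))=0$; second, the displacement $y-P(y)$ is orthogonal to $T_{P(y)}M$, which, since the normal space is spanned by the rows of $\nabla F(P(y))$, means there exists $\lambda\in\mathbb{R}^d$ with $y-P(y)=\nabla F(P(y))^T\lambda$.

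With this in mind, I would define the auxiliary map $\Phi:V\times\mathbb{R}^d\rightarrow\mathbb{R}^k\times\mathbb{R}^d$ by
\begin{equation}
\Phi(p,\lambda) = \bigl( p + \nabla F(p)^T\lambda,\, F(p)\bigr),
\end{equation}
which is of class $C^{r-1}$ because $\nabla F$ is $C^{r-1}$. Clearly $\Phi(x,0)=(x,0)$. The next step is to verify that $D\Phi(x,0)$ is invertible. A direct computation shows that
\begin{equation}
D\Phi(x,0)(v,\mu) = \bigl(v + \nabla F(x)^T\mu,\, \nabla F(x)v\bigr),
\end{equation}
and if this vanishes then $v\in T_xM$ while also $v=-\nabla F(x)^T\mu$ lies in the normal space, forcing $v=0$, and then $\mu=0$ because $\nabla F(x)$ has full row rank. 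The inverse function theorem then provides a $C^{r-1}$ local inverse $\Phi^{-1}$ defined on a neighbourhood of $(x,0)$.

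From this I would extract $P$ as follows. For $y\in B(x,\delta)$ with $\delta$ small enough, set $(p(y),\lambda(y)):=\Phi^{-1}(y,0)$; then $p(y)\in M$ and $y-p(y)$ is normal to $M$ at $p(y)$, so $p(y)$ is a critical point of $q\mapsto |y-q|^2$ on $M\cap V$, and $p:B(x,\delta)\rightarrow M$ is $C^{r-1}$. Defining $P(y):=p(y)$ gives the desired regularity; what remains is to check that $P(y)$ really is the \emph{unique} global nearest point on $M$, not merely on $M\cap V$.

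The main obstacle is this last uniqueness/minimality step, but it can be handled by a soft compactness argument. Since $M$ is closed and bounded, $\mathrm{dist}(y,M)$ is attained, and by continuity any minimizer must lie close to $x$ when $y$ is close to $x$; in particular it lies in $V$ and hence satisfies the critical point equations solved by $\Phi$. Shrinking $\delta$ further if needed, the local invertibility of $\Phi$ guarantees that the only such critical point near $(x,0)$ is $(p(y),\lambda(y))$, which therefore coincides with the unique nearest point. This gives both the uniqueness claim and the $C^{r-1}$ regularity asserted in the lemma.
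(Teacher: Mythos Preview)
The paper does not give its own proof of this lemma; it is quoted verbatim from \cite[Lemma 4]{lewis2008alternating} and used as a black box, so there is nothing in the paper to compare your argument against. That said, your approach via the inverse function theorem applied to $\Phi(p,\lambda)=(p+\nabla F(p)^T\lambda,\,F(p))$ is the standard one and is correct; the invertibility check for $D\Phi(x,0)$ and the extraction of $P$ from $\Phi^{-1}$ are carried out properly.

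One minor remark: in the uniqueness step you appeal to $M$ being closed and bounded, but that is a standing hypothesis of the paper (Section~\ref{sec:prelims}), not of the lemma itself, which is a purely local statement. The fix is routine: since $M\cap V=F^{-1}(0)$ is relatively closed in $V$, and any candidate nearest point $q$ for $y\in B(x,\delta)$ satisfies $|q-x|\leqslant 2|y-x|\leqslant 2\delta$, choosing $\delta$ small enough that $\overline{B(x,2\delta)}\subset V$ forces all candidates into the compact set $M\cap\overline{B(x,2\delta)}$, so the minimum is attained there. You should also note, when invoking uniqueness from the local diffeomorphism, that the associated multiplier $\lambda$ is automatically small because $|y-q|$ is small and $\nabla F(q)^T$ has a uniformly bounded left inverse near $x$; this ensures the minimizing pair $(q,\lambda)$ lies in the neighbourhood where $\Phi$ is invertible.
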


It follows from the lemma, using a simple compactness argument, that for any closed, bounded manifold $M\subset \mathbb{R}^k$ of class $C^4$, there is some open set $U$, containing $M$, such that the nearest point projection $P:U\rightarrow M$ is unique and has $C^3$ regularity. We assume without loss of generality that $O\subset U$. We also choose $\delta^*>0$ such that if $d(x,M)<\delta^*$ then $x\in O$ and define a corresponding cut-off function $\psi \in C^\infty \left( \mathbb{R}^k,\mathbb{R} \right)$ such that 
\begin{equation}\label{1.5}
 \psi(x) \equiv 0 \quad \text{if} \,\, d(x,M)>\delta^* \quad \text{and} \quad \psi(x) \equiv 1 \quad \text{if} \,\, d(x,M)<\frac{\delta^*}{2}.
\end{equation}
Then we can define two associated functionals
\begin{equation}\label{1.6}
 J(\vm):=\int_\Omega G(x,\vm,\nabla \vm)\,dx=\int_\Omega \psi\left( \vm\right) F\left( x,P(\vm),\nabla \left[P(\vm)\right]\right) \, dx
\end{equation}
and
\begin{equation}\label{1.7}
\begin{split}
    K(\vm):= &\int_\Omega H(x,\vm,\nabla \vm)\,dx \\ =& J(\vm)+ \int_\Omega \psi(\vm)\left[\left| \vm -P(\vm) \right|^2 + \left| \nabla \vm-\nabla \left[P(\vm)\right] \right|^2 \right] \, dx.
\end{split}
\end{equation}
These functionals both act on the set 
\begin{equation}\label{1.8}
 \mathcal{B} := \left.\left\{\, \vm\in W^{1,1}\left(\Omega,\mathbb{R}^k\right)\,\right|\,\vm=\vn_0\,\,\text{on}\,\,\partial\Omega_1\, \right\}.
\end{equation}
Then from the definitions \eqref{1.5}, \eqref{1.6} and \eqref{1.7}, together with the assumptions on the Lagrangian $F$, we have that 
\begin{itemize}
 \item $G,H \in C \left(\overline{\Omega} \times \mathbb{R}^k \times \mathbb{R}^{k\times d} \right)$
 \item For every $x\in \Omega$, $G(x,\cdot,\cdot),H(x,\cdot,\cdot)\in C^2\left( \mathbb{R}^k\times \mathbb{R}^{k\times d}\right)$.
\end{itemize}
It is also worth noting that the set of variations for the functionals $J$ and $K$ are the same as for the original functional $I$ so that 
\begin{equation}\label{1.9}
 \text{Var}_{\mathcal{B}}=\text{Var}_{\mathcal{A}}.
\end{equation}

We will denote the unit ball in $\mathbb{R}^n$ simply by ${\bf B}$. The function space $C_0^\infty\left(U,\mathbb{R}^k\right)$ will denote the space of smooth maps from $U$ to $\mathbb{R}^k$ with compact support in $U$. For ease of notation if $\vv \in \text{Var}_\mathcal{A}$ we will often use the functions $\vw_t(x)$ and $g(t)$ defined as
\begin{equation}\label{1.10}
 \vw_t(x):=P(\vn(x)+t\vv(x))\quad \text{and} \quad g(t):=I(\vw_t).
\end{equation}
So $\vw_t$ represents our small perturbation and $g(t)$ is the energy of the perturbed state. We will use the notation that primes denote derivatives with respect to t:
\begin{equation}\label{1.12}
 \vw'_t(x):=\frac{d}{dt}\vw_t(x).
\end{equation}
We also need to define some theoretical notions which we will be studying in this paper.
\begin{definition}\label{def3}
 A function $\vn \in \mathcal{A}$ is a strong local minimizer of the functional $I$ if there exists an $\epsilon>0$ such that if $\vm\in\mathcal{A}$ and $||\vn-\vm||_{\infty}<\epsilon$ then $I(\vm)\geqslant I(\vn)$.
\end{definition}

\begin{definition}\label{def4}
 A function $\vn \in \mathcal{A}$ is a weak local minimizer of the functional $I$ if there exists an $\epsilon>0$ such that if $\vm\in\mathcal{A}$ and $||\vn-\vm||_{1,\infty}<\epsilon$ then $I(\vm)\geqslant I(\vn)$.
\end{definition}

\begin{definition}\label{def7}
 A function $\vn \in \mathcal{A}$ is a strict weak (respectively strong) local minimizer of the functional $I$ if there exists an $\epsilon>0$ such that if $\vm\in\mathcal{A}$ and $0<||\vn-\vm||_{1,\infty}<\epsilon$ (respectively $0<||\vn-\vm||_{\infty}<\epsilon$), then $I(\vm)> I(\vn)$.
\end{definition}

\begin{definition}\label{def1}
The Weierstrass excess function of a Lagrangian $F$ is given by
\begin{equation}\label{1.13}
 \mathcal{E}_F (x,z,p,q):=F(x,z,q)-F(x,z,p)-F_p(x,z,p):(q-p).
\end{equation}
\end{definition}

\begin{definition}[Quasiconvexity]\label{def2}
A continuous function $f:\mathbb{R}^{k\times d} \rightarrow \mathbb{R}$ is quasiconvex at a point $\zeta\in \mathbb{R}^{k\times d}$ if 
\begin{equation}\label{1.14}
  f(\zeta) \leqslant \frac{1}{|D|} \int_D f\left( \zeta +\nabla \vphi(x)\right) \,dx
\end{equation}
 for every set $D\subset \mathbb{R}^d$ which is open and bounded and every $\vphi \in W^{1,\infty}_0\left(D,\mathbb{R}^k \right)$.
\end{definition}

\begin{definition}[Rank-One Convexity]\label{def5}
 A continuous function $f:\mathbb{R}^{k\times d} \rightarrow \mathbb{R}$ is rank-one convex at a point $\zeta\in \mathbb{R}^{k\times d}$ if
 \begin{equation}\label{1.15}
  f(\zeta)\leqslant tf(A_1)+(1-t)f(A_2)
 \end{equation}
 for any $A_1,A_2 \in \mathbb{R}^{k\times d}$, $t\in [0,1]$, such that $\zeta= tA_1+(1-t)A_2$ and $\text{rank}\left(A_1-A_2\right)\leqslant 1$.
\end{definition}

In most of the results and proofs given below we will apply a result from the unconstrained theory to either the functional $J$ or $K$, and then relate the condition back to $I$. Many of the standard results can be found in Giaquinta and Hildebrandt \cite[Ch 4]{giaquinta1996calculus}, although where appropriate we will use the simple extension of their results from $C^1$ minimizers to $W^{1,\infty}$ minimizers. This is straightforward when studying the first and second variations, but we have to be more careful when proving point-wise conditions like quasiconvexity or the Legendre-Hadamard condition for $W^{1,\infty}$ functions. Indeed in section \ref{sec:strong necessary} we prove directly the unconstrained result that a Lipschitz function which is a strong local minimizer is quasiconvex in the interior. There is no explicit result concerning this in the literature that we are aware of.

\subsection{The Case $M=\mathbb{S}^{k-1}$}

In the situation where $M=\mathbb{S}^{k-1}$ it is instructive to note the explicit forms for these abstract concepts outlined in the preliminaries. The projection map $P:\mathbb{R}^k\setminus \left\{ 0\right\} \rightarrow \mathbb{S}^{k-1}$ is given by $P(x)=\frac{x}{|x|}$, so that if $x\in \mathbb{R}^k$ and $\vm \in \mathcal{B}$ then
\begin{equation}\label{1.16}
 \nabla P(x)=\frac{1}{|x|}I -\frac{x\otimes x}{|x|^3}\quad \text{and}\quad \nabla \left[ P(\vm)\right] = \frac{\nabla\vm}{|\vm|}-\frac{\vm\otimes\left( \vm^T \nabla \vm \right)}{|\vm|^3}.
\end{equation}
In this instance we also have explicit forms for the perturbations. Using the definitions in \eqref{1.10}, if $\vn \in \mathcal{A}$ and $\vv\in \text{Var}_{\mathcal{A}}$, then
\begin{equation}\label{1.11}
 \vw_0=\vn,\quad \vw'_0=\vv-\vn\left(\vn\cdot \vv\right), \quad \vw''_0=-2\vv\left( \vn\cdot \vv\right) -\vn |\vv|^2+3\vn\left(\vn\cdot\vv\right)^2.
\end{equation}
All of the results in sections \ref{sec:weak necessary}-\ref{sec:strong sufficient} can be understood for mappings into spheres by using these relations in the statements rather than the more general forms for mappings into differentiable manifolds.

\begin{remark}\label{Curvature remark}
If $x\in M$ then $\nabla P(x)$ is the matrix representation of the orthogonal projection onto the tangent space of $M$ at $x$. Similarly $\vw'_0(x)$ will always lie in the tangent space to $M$ at $\vn(x)$. This can help with the interpretation of a number of the statements below. For example, in Theorem \ref{Weak sufficient} we see that the positivity of the second variation we require is in fact positivity in all tangential directions of $M$ at $\vn(x)$.
\end{remark}

%
%
%
%
%
%
%
%

\section{Necessary conditions for a weak local minimizer}\label{sec:weak necessary}

\begin{theorem}\label{Weak Necessary}
 Suppose that $\vn\in \mathcal{A}\cap W^{1,\infty}\left(\Omega,M\right)$ is a weak local minimizer of $I$ and for $v\in \text{Var}_\mathcal{A}$ define (see \eqref{1.10})
 \begin{equation}\label{3.1}
  g(t)=I\left( P(\vn+t\vv)\right).
 \end{equation}
 Then 
 \begin{equation}\label{3.2}
  g'(0)=0 \quad \text{and} \quad g''(0)\geqslant 0 \quad \forall \vv \in \text{Var}_{\mathcal{A}}.
 \end{equation}
 \end{theorem}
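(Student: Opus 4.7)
The plan is to reduce the constrained minimization to an unconstrained one by exploiting the auxiliary functional $J$ from \eqref{1.6}. First I would show that $g(t) = J(\vn + t\vv)$ for sufficiently small $|t|$: since $\vn(x) \in M$ a.e.\ and $\vv$ is bounded on $\overline{\Omega}$, the map $\vn + t\vv$ stays within the $\delta^*/2$-neighborhood of $M$ for $|t|$ small, where $\psi \equiv 1$, so the integrand of $J$ reduces to $F(x, P(\vn + t\vv), \nabla[P(\vn + t\vv)])$, matching the integrand of $I(\vw_t)$.

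The key step is then to promote $\vn$ to a weak local minimizer of the \emph{unconstrained} functional $J$ over $\mathcal{B}$. Given $\vm \in \mathcal{B}$ with $\|\vm - \vn\|_{1,\infty}$ small, $\vm$ takes values near $M$ so $J(\vm) = I(P(\vm))$, and $P(\vm) \in \mathcal{A}$. By the $C^3$ regularity of $P$ on a neighborhood of $M$ (Lemma \ref{projectionregularity}), the chain rule, and the $W^{1,\infty}$-boundedness of $\vn$, one obtains a Lipschitz estimate of the form $\|P(\vm) - \vn\|_{1,\infty} \leq C\|\vm - \vn\|_{1,\infty}$. Hence $P(\vm)$ lies in a small $W^{1,\infty}$-ball around $\vn = P(\vn)$, and the weak local minimizer hypothesis on $\vn$ for $I$ yields $I(P(\vm)) \geq I(\vn)$, i.e.\ $J(\vm) \geq J(\vn)$.

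Finally, I invoke the standard first- and second-variation results in the unconstrained setting applied to $J$. Since $G$ is $C^2$ in its last two arguments and both $\vn$ and $\vv$ lie in $W^{1,\infty}$, differentiation under the integral (justified by dominated convergence against the uniform bounds on $\vn + t\vv$ and $\nabla \vn + t\nabla \vv$, with the second derivatives of $G$ continuous on the compact image of those bounded sets) shows that $h(t) := J(\vn + t\vv)$ is $C^2$ in a neighborhood of $t = 0$. It attains a local minimum at $t = 0$, so $h'(0) = 0$ and $h''(0) \geq 0$. Since $g = h$ in a neighborhood of $0$, the conclusion $g'(0) = 0$ and $g''(0) \geq 0$ follows.

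I expect the main obstacle to be the Lipschitz transfer $\|P(\vm) - \vn\|_{1,\infty} \lesssim \|\vm - \vn\|_{1,\infty}$ in the second step. Controlling $\|\nabla[P(\vm)] - \nabla[P(\vn)]\|_\infty$ requires expanding $\nabla P(\vm)\nabla \vm - \nabla P(\vn)\nabla \vn$ as the sum $\nabla P(\vm)(\nabla \vm - \nabla \vn) + (\nabla P(\vm) - \nabla P(\vn))\nabla \vn$ and using uniform bounds on $\nabla P$ and $\nabla^2 P$ on the $\delta^*$-tube around $M$, together with the $W^{1,\infty}$ bounds on $\vn$. This is the step where the $C^4$ regularity of $M$, and hence $C^3$ regularity of $P$, is genuinely used.
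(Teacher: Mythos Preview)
Your proposal is correct and follows essentially the same approach as the paper: show that $\vn$ is a weak local minimizer of the unconstrained functional $J$ via the Lipschitz transfer $\|P(\vm)-\vn\|_{1,\infty}\leqslant C\|\vm-\vn\|_{1,\infty}$ (the paper carries out exactly the splitting $\nabla P(\vm)(\nabla\vm-\nabla\vn)+(\nabla P(\vm)-\nabla P(\vn))\nabla\vn$ that you anticipate), then apply the standard unconstrained first- and second-variation result. The only cosmetic difference is that the paper phrases the final step as computing $\delta J$, $\delta^2 J$ and noting that the $\psi$-derivative terms vanish at $t=0$, whereas you equivalently observe that $g(t)=J(\vn+t\vv)$ for $|t|$ small because $\psi\equiv 1$ there.
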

 
 \begin{proof}
 
 Since $\vn$ is a weak local minimizer, we know that there exists some $\alpha>0$ such that
 \begin{equation}\label{3.3}
  I(\vm)\geqslant I(\vn) \,\,\,\,\text{for all}\,\,\,\, \vm \in \mathcal{A} \,\,\,\, \text{such that} \,\,\,\, ||\vn-\vm||_{1,\infty} < \alpha.
 \end{equation}
 Take $\vm \in \mathcal{B}$, and without loss of generality we assume that $||\vn-\vm||_\infty\leqslant \frac{\delta^*}{2}$. We first obtain an upper bound for $\left|\left| \vn-P(\vm) \right|\right|_{1,\infty}$ in terms of $\left|\left| \vn-\vm\right|\right|_{1,\infty}$. We do this with two straightforward calculations. 
 \begin{equation}\label{3.4}
 \begin{split}
  \left| \vn-P(\vm) \right| &\leqslant |\vn-\vm|+|\vm-P(\vm)| \\
  &\leqslant 2|\vn-\vm|
 \end{split}
 \end{equation}
 since $P(\vm)$ is the closest point projection onto $M$. As for the derivatives we first note the relation
 \begin{equation}\label{3.5}
  \nabla \left[P(\vm(x))\right]=\nabla P (\vm(x))\,\nabla\vm(x),
 \end{equation}
 with the right-hand side of \eqref{3.5} being understood as two matrices multiplied together. As $P\in C^3$ and $||\vn-\vm||_\infty<\infty$, we use \eqref{3.5} to deduce $\nabla \left[ P(\vm)\right] \in L^1$ so that $P(\vm)\in \mathcal{A}$. Furthermore
 \begin{equation}\label{3.6}
 \begin{split}
   |\nabla\vn-\nabla \left[P(\vm)\right]|&
   =\left| \nabla P(\vm)\nabla \vm-\nabla P(\vm) \nabla\vn+\nabla P(\vm)\nabla \vn-\nabla \vn\right|\\
   &\leqslant |\nabla P(\vm)|\,|\nabla\vm-\nabla\vn|+|\nabla\vn|\,|\nabla P(\vm)-\nabla P(\vn)|.
 \end{split}
 \end{equation}

 In order to bound these terms we use the fact that the projection map $P$ is twice differentiable and the mean value theorem to see that 
 \begin{equation}\label{3.7}
   |\nabla \vn-\nabla\left[P(\vm)\right]| 
   \leqslant C_1\left( |\nabla\vm-\nabla\vn|+|\vm-\vn|\right).
 \end{equation}
 Combining \eqref{3.4} and \eqref{3.7} tells us that $\left|\left| \vn-P(\vm) \right|\right|_{1,\infty} \leqslant D\left|\left| \vn-\vm \right|\right|_{1,\infty}$, where $D$ is independent of $\vm$. Therefore if $\left|\left| \vn-\vm \right|\right|_{1,\infty}$ is sufficiently small 
 \begin{equation}\label{3.8}
   J(\vm)=I\left( P(\vm)\right) \geqslant I(\vn)=J(\vn).
 \end{equation}
 In other words $\vn$ is a weak local minimizer of $J$. We note here that \eqref{3.8} holds because $||\vm-\vn||_{\infty} \leqslant\frac{\delta^*}{2}$ implies $\psi(\vm)\equiv1$. Now we can apply the standard result \cite[Ch 4,1.1]{giaquinta1996calculus} to say that 
 \begin{equation}\label{3.9}
  \delta J(\vn)(\vv) :=\left.\frac{d}{dt}J(\vn+t\vv)\right|_{t=0}=0
 \end{equation}
 and 
 \begin{equation}\label{3.10}
  \delta^2 J(\vn)(\vv,\vv):=\left.\frac{d^2}{dt^2}J(\vn+t\vv)\right|_{t=0}\geqslant 0,
 \end{equation}
 for all $\vv\in \text{Var}_{\mathcal{A}}$. In order to relate these conditions to $I$, we first calculate that for all $\vv\in \text{Var}_{\mathcal{A}}$
 \begin{equation}\label{3.11}
  \left.\frac{d}{dt}\psi(\vn+t\vv)\right|_{t=0}= \left.\frac{d^2}{dt^2}\psi(\vn+t\vv)\right|_{t=0}=0,
 \end{equation}
 so that the construction of the functional $J$ implies, as required, that
 \begin{equation}\label{3.12}
  g'(0)=\left.\frac{d}{dt} I\left( P(\vn+t\vv) \right)\right|_{t=0}= \left.\frac{d}{dt}J(\vn+t\vv)\right|_{t=0}=0
 \end{equation}
 and 
 \begin{equation}\label{3.13}
  g''(0)=\left.\frac{d^2}{dt^2} I\left( P(\vn+t\vv) \right)\right|_{t=0}= \left.\frac{d^2}{dt^2}J(\vn+t\vv)\right|_{t=0}\geqslant 0. 
 \end{equation}
 \qed
\end{proof}

\begin{theorem}[Legendre's Condition]\label{Legendre}
 Suppose that $\vn\in \mathcal{A}\cap W^{1,\infty}\left(\Omega,M\right)$ is a weak local minimizer of $I$ and denote $F(x,\vn(x),\nabla\vn(x))$ by $F$, then
 \begin{equation}\label{3.14}
  \sum_{i,j,k,l}\frac{\partial^2F}{\partial \vn_{i,j}\vn_{k,l}}\eta_j\eta_l \left[ \nabla P(\vn)\zeta\right]_i\left[ \nabla P(\vn)\zeta\right]_k \geqslant 0,
 \end{equation}
 for all $\eta\in \mathbb{R}^d$, $\zeta\in \mathbb{R}^{k}$ and a.e. $x\in\Omega$.

\end{theorem}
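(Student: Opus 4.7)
The plan is to reduce the theorem to the unconstrained case already exploited in the proof of Theorem \ref{Weak Necessary}. There it was shown that $\vn$ is a weak local minimizer of the auxiliary functional $J$, whose integrand $G(x,z,p) = \psi(z)\, F\!\left(x, P(z), \nabla P(z)\, p\right)$ is of class $C^2$ in its last two arguments. I would then invoke the classical pointwise Legendre--Hadamard necessary condition for the unconstrained functional $J$: for a.e. $x \in \Omega$ and all $\xi \in \mathbb{R}^k$, $\eta \in \mathbb{R}^d$,
\begin{equation}
\sum_{i,j,k,l} G_{p_{ij} p_{kl}}(x, \vn, \nabla \vn)\, \xi_i \xi_k \eta_j \eta_l \geqslant 0.
\end{equation}

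The next step is a chain-rule computation of $G_{pp}$. Because $\psi$ and $\nabla P$ depend on $z$ only, differentiation in $p$ hits only the matrix-argument of $F$, so
\begin{equation}
G_{p_{ij} p_{kl}}(x,z,p) = \psi(z) \sum_{a,b} F_{p_{aj}\, p_{bl}}\!\left(x, P(z), \nabla P(z)\, p\right) [\nabla P(z)]_{ai}\, [\nabla P(z)]_{bk}.
\end{equation}
Evaluating at $z = \vn(x) \in M$, $p = \nabla \vn(x)$ and using $\psi(\vn) = 1$, $P(\vn) = \vn$, and $\nabla P(\vn)\, \nabla \vn = \nabla \vn$, I would substitute this identity into the Legendre--Hadamard inequality for $G$. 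The double contraction $\sum_i [\nabla P(\vn)]_{ai}\, \xi_i = [\nabla P(\vn)\, \xi]_a$ then naturally produces the projected vector appearing in \eqref{3.14}; renaming $\xi$ as $\zeta$ and relabeling the dummy indices $a, b$ as $i, k$ gives the statement.

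The main obstacle is justifying the pointwise a.e. Legendre--Hadamard condition for a $W^{1,\infty}$ minimizer of $J$ rather than a $C^1$ minimizer (as treated in \cite[Ch.~4]{giaquinta1996calculus}); the preliminaries already flag such pointwise conditions in the $W^{1,\infty}$ setting as requiring care. The fix is a familiar oscillation argument: one tests the non-negative second variation $\delta^2 J(\vn)(\vphi_\varepsilon, \vphi_\varepsilon) \geqslant 0$ with strongly oscillating perturbations of the form $\vphi_\varepsilon(x) = \varepsilon\, \xi\, \rho\!\left(\eta \cdot x / \varepsilon\right) \chi(x)$, compactly supported in a small ball centred at a common Lebesgue point of $\nabla \vn$ and of $x \mapsto G_{pp}(x, \vn(x), \nabla \vn(x))$, and passes to the limit $\varepsilon \to 0$. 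Once this pointwise inequality is in hand for $G$, the chain-rule computation above completes the proof without further analytic difficulty.
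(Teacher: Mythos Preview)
Your argument is correct, but the paper takes a different route. Rather than applying the Legendre--Hadamard condition to the auxiliary functional $J$ and then chain-ruling back to $F$, the paper writes out the second variation of $I$ itself as a quadratic functional $L(\vphi)=\int_\Omega W(x,\vphi,\nabla\vphi)\,dx$, observes from Theorem~\ref{Weak Necessary} that $L(\vphi)\geqslant 0$ for all $\vphi$, so that $\vphi=0$ is a \emph{strong} local minimizer of $L$, and then forward-references Corollary~\ref{Weierstrass} (the Weierstrass condition, derived in Section~\ref{sec:strong necessary} from the $W^{1,\infty}$ quasiconvexity Theorem~\ref{quasiconvexityunconstrained}) to obtain $\mathcal{E}_W(x,0,0,\zeta\otimes\eta)\geqslant 0$ a.e.; since $W$ is quadratic this is exactly \eqref{3.14}.

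The trade-off is this: your approach is more direct and avoids the forward reference, but you then have to supply the oscillation/Lebesgue-point argument yourself to get the pointwise a.e.\ inequality for a merely Lipschitz minimizer of $J$. The paper sidesteps that by passing to the quadratic functional $L$, for which $\vphi=0$ is a global and hence strong local minimizer, and then invoking its own Section~\ref{sec:strong necessary} machinery, which is proved precisely in the $W^{1,\infty}$ setting. So the paper effectively absorbs the $W^{1,\infty}$ difficulty into Theorem~\ref{quasiconvexityunconstrained} once and for all, whereas you handle it locally; both are valid, and your chain-rule computation of $G_{pp}$ and the identification $\nabla P(\vn)\nabla\vn=\nabla\vn$ are correct.
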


\begin{proof}
 
We take an arbitrary $\vphi \in C^{\infty}_0 \left( \Omega ,\mathbb{R}^{k}\right)$ and let $\vw_t=\vn+t\vphi$. As part of this proof we need an explicit form for the second variation. Therefore with a brief calculation we find
 \begin{equation}\label{3.15}
 \begin{split}
  L(\vphi):&=\int_\Omega W(x,\vphi,\nabla\vphi)\,dx \\
  &=\left. \frac{d^2}{dt^2} I\left( \vw_t\right)\right|_{t=0} \\
  &=\int_{\Omega} {\vw'_0}^T F_{\vn\vn}(x)\vw'_0 +2\left[ F_{\vn\nabla\vn}(x) \nabla \vw'_0\right]\cdot \vw'_0 + \nabla {\vw'_0}^T F_{\nabla\vn \nabla\vn}(x) \nabla\vw'_0 \\
  & \quad + F_{\vn}(x)\cdot \vw_0''+F_{\nabla\vn}(x):\nabla \vw''_0\,dx
  \end{split}
 \end{equation}
 where
 \begin{equation}\label{3.16}
 \begin{array}{c}
  F(x):=F(x,\vn(x),\nabla\vn(x)),  \\ \\
  \vw'_0(x)_i=\sum_j P_{i,j}(\vn(x))\vphi_j(x),\\ \\
  \vw_0{''}(x)_i=\sum_{j,k} P_{i,jk}(\vn(x))\vphi_{j}(x)\vphi_{k}(x). 
 \end{array}
 \end{equation}
 The summation notations we are using in \eqref{3.15} are
  \begin{equation}\label{3.17} 
  \left(F_\vn\right)\cdot {\bf b}=\frac{\partial F}{\partial \vn_i}{\bf b}_i, \quad 
  \left(F_{\nabla\vn}\right):{\bf A}=\frac{\partial F}{\partial \vn_{i,j}}{\bf A}_{i,j}, \quad {\bf b}^T\left(F_{\vn\vn}\right){\bf b}=\frac{\partial^2 F}{\partial \vn_i \partial \vn_j}{\bf b}_i{\bf b}_j, 
 \end{equation}
 and 
 \begin{equation}\label{3.18}
  \left[\left(F_{\vn\nabla\vn}\right){\bf A}\right]\cdot{\bf b}=\frac{\partial^2 F}{\partial \vn_i \partial \vn_{j,k}}{\bf A}_{j,k}{\bf b}_i, \quad
  {\bf A}^T\left(F_{\nabla\vn\nabla\vn}\right){\bf A}=\frac{\partial^2 F}{\partial \vn_{i,j} \partial \vn_{k,l}}{\bf A}_{i,j}{\bf A}_{k,l}.
 \end{equation}
 
 We know from Theorem \ref{Weak Necessary} that since $\vn\in \mathcal{A}\cap W^{1,\infty}\left(\Omega,M\right)$ is a weak local minimizer of $I$, its second variation is non-negative. Therefore
 \begin{equation}\label{3.19}
L(\vphi)\geqslant 0 \quad \forall \vphi\in C_0^\infty \left( \Omega, \mathbb{R}^{k}\right).
 \end{equation}
 By using a simple density argument we can expand upon \eqref{3.19} and say
 \begin{equation}\label{3.20}
  L(\vphi)\geqslant 0 \,\,\,\, \forall \vphi \in W^{1,2}_0\left( \Omega,\mathbb{R}^k\right).
 \end{equation}
 So certainly, $\vphi=0$ is a strong local minimizer of the functional $L$. Now we can apply Corollary \ref{Weierstrass} (which will be proved in section \ref{sec:strong necessary}) to deduce that for a.e. $x\in \Omega$
 \begin{equation}\label{3.21}
  \mathcal{E}_W\left(x,0,0,\zeta\otimes \eta\right)\geqslant 0 \,\,\,\, \forall \, \zeta \in \mathbb{R}^k,\,\,\eta\in \mathbb{R}^d. 
 \end{equation}
 Since $W$ is a quadratic function in $\vphi$ we know that $W_{\nabla\vphi}(x,0,0)=0$. Therefore we simplify \eqref{3.21} to find
 \begin{equation}\label{3.22}
  \begin{split}
   0&\leqslant \mathcal{E}_W\left(x,0,0,\zeta\otimes \eta\right)\\
   &=W(x,0,\zeta\otimes \eta)\\
   &=\sum_{i,j,k,l} \frac{\partial^2 F}{\partial \vn_{i,j}\partial \vn_{k,l}} \eta_j\eta_l \left[ \nabla P(\vn)\zeta\right]_i\left[ \nabla P(\vn)\zeta\right]_k.
  \end{split}
 \end{equation}
\qed
\end{proof}

%
%
%
%
%
%
%
%

\section{Sufficient conditions for a weak local minimizer}\label{sec:weak sufficient}

\begin{theorem}\label{Weak sufficient}
 
 Let $\vn \in\mathcal{A}\cap W^{1,\infty}\left(\Omega,M\right)$ and for $v\in \text{Var}_\mathcal{A}$ define (see \eqref{1.10})
 \begin{equation}\label{4.1}
  g(t)=I\left(P(\vn+t\vv)\right)=I(\vw_t).
 \end{equation}
 Suppose there exists some $\gamma>0$ such that
 \begin{equation}\label{4.2}
  g'(0)=0\quad\text{and}\quad g''(0)\geqslant \gamma ||\vw'_0||_{1,2}^2\quad \forall \vv \in \text{Var}_{\mathcal{A}}.
 \end{equation}
 Then $\vn$ is a strict weak local minimizer of $I$.
 \end{theorem}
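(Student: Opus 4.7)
The plan is to work with the augmented functional $K$ from \eqref{1.7} rather than $J$. The reason is that $\vw'_0=\nabla P(\vn)\vv$ is the orthogonal projection of $\vv$ onto the tangent bundle of $M$, so for any $\vv$ whose value is purely normal to $M$ at $\vn(x)$ one has $\vw'_0=0$ and the hypothesis places no control on $\vv$. Consequently $J$ has a degenerate second variation in normal directions and cannot be coercive in $W^{1,2}$; the penalty terms distinguishing $K$ from $J$ are designed precisely to restore coercivity in that missing direction.

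The first step is to compute $\delta K(\vn)(\vv)$ and $\delta^2 K(\vn)(\vv,\vv)$ for $\vv\in\text{Var}_{\mathcal{B}}=\text{Var}_{\mathcal{A}}$. Since $\vn(x)\in M$ we have $\psi(\vn)\equiv 1$, $P(\vn)=\vn$ and $\nabla[P(\vn)]=\nabla\vn$, so each integrand $|\vm-P(\vm)|^2$ and $|\nabla\vm-\nabla[P(\vm)]|^2$ vanishes at $\vm=\vn$ together with its first $t$-derivative along the path $\vm_t=\vn+t\vv$. Its second $t$-derivative at $t=0$ is $2|\vv-\vw'_0|^2$ (resp.\ $2|\nabla\vv-\nabla\vw'_0|^2$), using $\tfrac{d}{dt}P(\vm_t)|_{t=0}=\vw'_0$ and the commutation of $\nabla$ with $\tfrac{d}{dt}$. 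Hence $\delta K(\vn)(\vv)=g'(0)=0$ and
\[
\delta^2 K(\vn)(\vv,\vv)=g''(0)+2\,||\vv-\vw'_0||_{1,2}^2.
\]
Combining this identity with the hypothesis $g''(0)\geqslant\gamma||\vw'_0||_{1,2}^2$ and the elementary bound $||\vv||_{1,2}^2\leqslant 2(||\vw'_0||_{1,2}^2+||\vv-\vw'_0||_{1,2}^2)$ yields the uniform coercivity estimate
\[
\delta^2 K(\vn)(\vv,\vv)\geqslant\tilde\gamma\,||\vv||_{1,2}^2,\qquad \tilde\gamma=\tfrac{1}{2}\min(\gamma,2)>0.
\]

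Because $H$ is $C^2$ in $(\vm,\nabla\vm)$ and $\vn\in W^{1,\infty}$, the standard unconstrained weak sufficiency theorem (the $W^{1,\infty}$ extension of the result in \cite[Ch 4]{giaquinta1996calculus} noted in section \ref{sec:prelims}) applies to $K$ and produces $\epsilon>0$ such that $K(\vm)>K(\vn)$ for every $\vm\in\mathcal{B}$ with $0<||\vm-\vn||_{1,\infty}<\epsilon$. To transfer this conclusion to $I$, observe that if $\vm\in\mathcal{A}$ then $\vm(x)\in M$ almost everywhere, so $P(\vm)=\vm$, $\nabla[P(\vm)]=\nabla\vm$ and $\psi(\vm)\equiv 1$, giving $K(\vm)=J(\vm)=I(\vm)$; the same identity holds at $\vn$, so $K(\vm)>K(\vn)$ becomes $I(\vm)>I(\vn)$, as required.

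The main obstacle is precisely the passage from the tangential coercivity assumed for $g''(0)$ to the full $W^{1,2}$ coercivity of $\delta^2 K(\vn)$ needed to invoke the standard sufficiency theorem; once the penalty's contribution to the second variation is correctly identified as $2||\vv-\vw'_0||_{1,2}^2$ and absorbed via the parallelogram-type inequality, the rest is bookkeeping in going between $I$, $J$ and $K$ via the projection $P$.
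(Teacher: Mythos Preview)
Your argument is correct and follows essentially the same route as the paper: pass to the penalized functional $K$, identify $\delta K(\vn)(\vv)=g'(0)$ and $\delta^2 K(\vn)(\vv,\vv)=g''(0)+2\|\vv-\vw'_0\|_{1,2}^2$, establish $W^{1,2}$-coercivity of the second variation of $K$, and invoke the standard unconstrained weak sufficiency result before restricting back to $\mathcal{A}$. The only cosmetic difference is the coercivity step: the paper completes the square to extract the sharper constant $\tfrac{2\gamma}{2+\gamma}$, whereas your parallelogram-type bound gives $\tfrac{1}{2}\min(\gamma,2)$, but either suffices.
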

 
\begin{proof}

For the sufficiency results we will use the functional $K$ as the unconstrained counterpart to $I$. The reason that we cannot use $J$ as we did in the necessity proofs, is that the inherent structure of $J$ means it cannot have a strict local minimizer. This is because if we take any two functions $\vm_1, \vm_2 \in \mathcal{B}$ such that $P(\vm_1)=P(\vm_2)$, then $J(\vm_1)=J(\vm_2)$. We will use our assumptions to show that at $\vn$, the first variation of $K$ is zero and its second variation is strictly positive. We take a test function $\vv \in \text{Var}_{\mathcal{A}}$ and recall that the function $\vw_t$ is defined by $\vw_t=P(\vn+t\vv)$ so that
\begin{equation}\label{4.3}
\vw_0(x)=\vn(x).
\end{equation}
Then
\begin{equation}\label{4.4}
\begin{split}
 \delta K(\vn)(\vv) &:= \left. \frac{d}{dt}K(\vn+t\vv) \right|_{t=0} \\ &=
 \left.\frac{d}{dt}
\int_\Omega \psi\left( \vn+t\vv\right) \left[ F\left( x,\vw_t,\nabla \vw_t \right) +\left| \vn+t\vv -\vw_t \right|^2 + \left| \nabla (\vn+t\vv)-\nabla \vw_t \right|^2  \right]\, dx\right|_{t=0} \\ &=
g'(0)+ 
\int_\Omega \left.\frac{d}{dt}\psi(\vn+t\vv)\right|_{t=0}(*)\,dx\\ &+
\left.\frac{d}{dt}\int_\Omega \psi\left( \vn+t\vv\right) \left[ \left| \vn+t\vv -\vw_t \right|^2 + \left| \nabla (\vn+t\vv)-\nabla \vw_t \right|^2  \right]\, dx\right|_{t=0} \\ &=
g'(0)+\int_\Omega \left.\frac{d}{dt}\psi(\vn+t\vv)\right|_{t=0} (**)+2\left.\psi(\vn+t\vv)(\vn+t\vv-\vw_t)\cdot (\vv-\vw'_t)\right|_{t=0} \\ &+
\left.2\psi(\vn+t\vv) \left( \nabla(\vn+t\vv)-\nabla \vw_t \right):\left( \nabla \vv-\nabla \vw'_t \right) \right|_{t=0}\, dx \\ &= 
g'(0) \\&=
0,
\end{split}
\end{equation}
where $(*)$ and $(**)$ denote other terms. When it comes to the second variation, we simply take one more derivative than \eqref{4.4} and find 
\begin{equation}\label{4.5}
\begin{split}
 &\delta^2 K(\vn)(\vv,\vv) \\ & =\left.\frac{d^2}{dt^2} K(\vn+t\vv) \right|_{t=0} \\ &=
  \left.\frac{d^2}{dt^2}
\int_\Omega \psi\left( \vn+t\vv\right) \left[ F\left( x,\vw_t,\nabla \vw_t \right) +\left| \vn+t\vv -\vw_t \right|^2 + \left| \nabla (\vn+t\vv)-\nabla \vw_t \right|^2  \right]\, dx\right|_{t=0} \\ &=
g''(0)+ \int_\Omega \left.\frac{d^2}{dt^2}\psi(\vn+t\vv)\right|_{t=0}(*)+\left.\frac{d}{dt}\psi(\vn+t\vv)\right|_{t=0}(**) \\ &+
\left.2\left[ |\vv-\vw'_t|^2-\vw''_t\cdot\left(\vn+t\vv-\vw_t\right)+
 |\nabla \vv -\nabla \vw'_t|^2-\left( \nabla(\vn+t\vv)-\nabla\vw_t \right):\nabla \vw''_t \right]\right|_{t=0}\,dx  \\ &=
g''(0)+2\int_\Omega |\vv-\vw'_0|^2+|\nabla\vv-\nabla\vw'_0|^2\,dx.
\end{split} 
\end{equation}

In both \eqref{4.4} and \eqref{4.5} we have been implicitly using the definition of $\psi$ to know that for any given $t$ small enough $\psi(\vn+t\vv)\equiv 1$. Now we use our assumption on the second variation of $I$ to show that the above expression is strictly positive.
\begin{equation}\label{4.6}
\begin{split}
 \delta^2K(\vn)(\vv,\vv) \geqslant& \int_\Omega \gamma|\vw'_0|^2+ \gamma|\nabla \vw'_0|^2+2|\vv-\vw'_0|^2+2|\nabla \vv-\nabla \vw'_0|^2\,dx. \\
 =&\frac{4}{2+\gamma}\int_\Omega \left| \vv-\left( 1+ \frac{\gamma}{2}\right)\vw'_0\right|^2+\left| \nabla \vv-\left( 1+ \frac{\gamma}{2}\right)\nabla\vw'_0\right|^2\,dx\\
 &+\frac{2\gamma}{2+\gamma}||\vv||_{1,2}^2 \\
 \geqslant & \frac{2\gamma}{2+\gamma} ||\vv||^{1,2}_2
 \end{split}
\end{equation}
This proves the positivity of the second variation of $K$. Therefore the well-known unconstrained result \cite[Ch 4,1.1]{giaquinta1996calculus} is applicable and implies that $\vn$ is a strict weak local minimizer of $K$. Thus there exists some $\epsilon>0$ such that 
\begin{equation}\label{4.7}
K(\vm)>K(\vn) \,\,\,\, \text{if} \,\,\,\, \vm \in \mathcal{B}\,\,\,\, \text{and} \,\,\,\, 0<||\vn-\vm||_{1,\infty} < \epsilon.
\end{equation}
This implies that if $\vm\in\mathcal{A}\subset \mathcal{B}$ and $0<||\vm-\vn||_{1,\infty}<\epsilon$, then
\begin{equation}\label{4.8}
I(\vm)=K(\vm)> K(\vn)=I(\vn).
\end{equation}
Therefore $\vn$ is a strict weak local minimizer of $I$.
\qed
\end{proof}

%
%
%
%
%
%
%
%

\section{Necessary conditions for a strong local minimizer}\label{sec:strong necessary}

Now we examine the extra conditions that a strong local minimizer must satisfy in addition to the conditions proved in section \ref{sec:weak necessary}. However in order to prove our constrained result, we need an unconstrained result which we can apply to our functional $J$. Husseinov \cite[Th. 1.8]{husseinov1995weierstrass} has proved such a result but its technicalities mean that it requires a degree of manipulation to be applicable in our situation. Hence we will present our own self-contained proof (following an unpublished argument of John Ball), which we can readily apply. Solely for the purposes of our next theorem the set of admissible functions will be
\begin{equation}\label{5.1}
 \mathcal{B}=\left.\left\{\, \vn\in W^{1,1}\left(\Omega,\mathbb{R}^k\right)\,\right|\,\vn=\vn_0\,\,\text{on}\,\,\partial\Omega_1\, \right\},
\end{equation}
with the slight alteration that $\vn_0\in W^{1,1}\left( \Omega,\mathbb{R}^k\right)$. We also do not require any differentiability of the Lagrangian in order to prove the quasiconvexity results. Hence just for the duration of the next three quasiconvexity proofs, we will only assume the Lagrangian is continuous in its arguments. So for Theorem \ref{quasiconvexityunconstrained} we assume that
\begin{equation}\label{5.2}
 F\in C\left( \overline{\Omega}\times\mathbb{R}^d\times\mathbb{R}^{k\times d}\right)
\end{equation}
and for Theorems \ref{quasiconvexity} and \ref{quasiconvexity-boundary} we assume that
\begin{equation}\label{5.3}
 F\in C\left( \overline{\Omega}\times M\times\mathbb{R}^{k\times d}\right).
\end{equation}

\begin{theorem}[Quasiconvexity in the interior - Unconstrained]\label{quasiconvexityunconstrained}
 Suppose that $\vn \in \mathcal{B} \cap W^{1,\infty}\left( \Omega,\mathbb{R}^k\right)$ is a strong local minimizer of $I$. Then for a.e. $x\in \Omega$
 \begin{equation}\label{5.4}
  F(x,\vn(x),\nabla\vn(x))\leqslant \frac{1}{|D|} \int_D F(x,\vn(x),\nabla\vn(x)+\nabla_y \vpsi(y))\,dy,
 \end{equation}
 for every open and bounded $D\subset \mathbb{R}^d$ and $\vpsi \in W^{1,\infty}_0\left(D,\mathbb{R}^k\right)$.
\end{theorem}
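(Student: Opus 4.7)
The plan is to use a blowup/rescaling argument around a Lebesgue point $x_0$ of $\nabla\vn$, inserting a rescaled copy of $\vpsi$ into $\vn$ to form a competitor that is admissible and strongly close to $\vn$. Fix an arbitrary open bounded $D\subset\mathbb{R}^d$ and $\vpsi\in W^{1,\infty}_0(D,\mathbb{R}^k)$, and let $x_0$ be any point at which
\begin{equation}
\lim_{\epsilon\to 0}\frac{1}{\epsilon^d|D|}\int_{x_0+\epsilon D}|\nabla\vn(x)-\nabla\vn(x_0)|\,dx=0,
\end{equation}
and at which analogous Lebesgue conditions hold for $x\mapsto F(x,\vn(x),\nabla\vn(x))$ with respect to the translates $x_0+\epsilon D$. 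Lebesgue's differentiation theorem guarantees that a.e.\ $x_0\in\Omega$ is such a point, and since we may restrict to $x_0$ in the interior of $\Omega$ it suffices to prove \eqref{5.4} at each such $x_0$.

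Next I would build the competitor. For $\epsilon>0$ small enough that $x_0+\epsilon\overline{D}\subset\Omega$, define
\begin{equation}
\vn_\epsilon(x):=\begin{cases}\vn(x)+\epsilon\,\vpsi\bigl((x-x_0)/\epsilon\bigr)&\text{if }x\in x_0+\epsilon D,\\ \vn(x)&\text{otherwise.}\end{cases}
\end{equation}
Because $\vpsi\in W^{1,\infty}_0(D,\mathbb{R}^k)$, the function $\vn_\epsilon-\vn$ lies in $W^{1,\infty}_0(\Omega,\mathbb{R}^k)$, so $\vn_\epsilon\in\mathcal{B}$; moreover $\|\vn_\epsilon-\vn\|_\infty\le\epsilon\|\vpsi\|_\infty\to 0$. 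Since $\vn$ is a strong local minimizer, for all sufficiently small $\epsilon$ we have $I(\vn_\epsilon)\ge I(\vn)$, and the perturbation is supported in $x_0+\epsilon D$, so
\begin{equation}
0\le\int_{x_0+\epsilon D}\Bigl[F(x,\vn_\epsilon,\nabla\vn_\epsilon)-F(x,\vn,\nabla\vn)\Bigr]dx.
\end{equation}
Dividing by $\epsilon^d$ and changing variables $y=(x-x_0)/\epsilon$ rewrites this inequality as a difference of two integrals over $D$, the first with integrand $F(x_0+\epsilon y,\vn(x_0+\epsilon y)+\epsilon\vpsi(y),\nabla\vn(x_0+\epsilon y)+\nabla\vpsi(y))$ and the second with $\vpsi$ dropped.

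The final step is to pass to the limit $\epsilon\to 0$. The second integral tends to $|D|\cdot F(x_0,\vn(x_0),\nabla\vn(x_0))$ by the Lebesgue-point hypothesis on $x\mapsto F(x,\vn(x),\nabla\vn(x))$. For the first, the Lebesgue-point property of $\nabla\vn$ gives $\nabla\vn(x_0+\epsilon y)\to\nabla\vn(x_0)$ in $L^1(D)$, so along a subsequence $\epsilon_j\to 0$ the convergence is pointwise a.e.\ in $y$. Combined with the uniform convergence $\vn(x_0+\epsilon y)+\epsilon\vpsi(y)\to\vn(x_0)$ (from Lipschitz regularity of $\vn$) and the continuity of $F$, the integrand converges pointwise a.e.\ to $F(x_0,\vn(x_0),\nabla\vn(x_0)+\nabla\vpsi(y))$. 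Since $\nabla\vn$, $\vn$, $\vpsi$, $\nabla\vpsi$ take values in a fixed compact set and $F$ is continuous, the integrands are uniformly bounded; dominated convergence then yields the desired limit and hence \eqref{5.4}.

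The main obstacle is the low regularity of $\nabla\vn$: unlike the classical $C^1$ case, the gradient is only essentially bounded, so one cannot evaluate $\nabla\vn$ pointwise and must instead exploit the Lebesgue differentiation theorem and extract a subsequence to pass from $L^1$ to a.e.\ convergence. A secondary point, worth handling explicitly, is to ensure that the chosen set of Lebesgue points has full measure simultaneously for $\nabla\vn$ and for $F(x,\vn(x),\nabla\vn(x))$ with respect to the family of translated homothets $x_0+\epsilon D$; this follows from the Besicovitch differentiation theorem since $D$ is a fixed bounded set.
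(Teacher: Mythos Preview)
Your argument is correct, and the blowup idea is the same as the paper's, but the way you pass to the limit is genuinely different. You fix a single Lebesgue point $x_0$ of $\nabla\vn$ and of $F(\cdot,\vn(\cdot),\nabla\vn(\cdot))$, and use the Lebesgue differentiation theorem together with a subsequence extraction to get pointwise a.e.\ convergence of $\nabla\vn(x_0+\epsilon_j y)$ in $y$, then dominated convergence. The paper instead avoids Lebesgue points entirely by \emph{averaging over the blowup centre}: it multiplies $I(\vn+\vphi^z_\epsilon)-I(\vn)\ge 0$ by a non-negative $f\in C^\infty_0(B(x,\delta))$, integrates in $z$, performs two changes of variables, and then applies dominated convergence directly to the joint integral in $(y,z')$. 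The upshot is an inequality $\int f(z)g(z)\,dz\ge 0$ for all such $f$, hence $g\ge 0$ a.e.; no subsequence is needed and no Lebesgue-point argument enters.

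What each approach buys: the paper's averaging trick (attributed to Ball) is slicker in that the limit passage is a single clean dominated-convergence step, and the ``a.e.'' conclusion falls out at the end from the arbitrariness of $f$. Your route is more elementary in spirit and closer to the classical $C^1$ proof; it also makes transparent why only Lebesgue points of $\nabla\vn$ matter. Your remark that the full-measure set of good points can be taken independent of $D$ and $\vpsi$ is important and correct: since any bounded open $D$ satisfies $x_0+\epsilon D\subset B(x_0,\epsilon R)$ with $|x_0+\epsilon D|$ comparable to $|B(x_0,\epsilon R)|$, ordinary Lebesgue points with respect to balls already suffice, and Besicovitch is not really needed.
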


\begin{proof}
 
 We take an open and bounded $D\subset \mathbb{R}^d$, $\vpsi \in W^{1,\infty}_0\left(D,\mathbb{R}^k\right)$, and pick some $x\in \Omega$. We choose a $\delta>0$ such that $B(x,2\delta)\subset \Omega$ and define 
 \begin{equation}\label{5.5}
  \vphi^z_\epsilon(x):=\epsilon \vpsi\left( \frac{x-z}{\epsilon} \right),
 \end{equation}
 for some $z\in B(x,\delta)$. Then we can say that since $\vn$ is a strong local minimizer of $I$, if $\epsilon$ is sufficiently small, $\vphi^z_\epsilon \in C^\infty_0\left( \Omega,\mathbb{R}^k\right)$ and 
 \begin{equation}\label{5.6}
  I(\vn+\vphi^z_\epsilon)-I(\vn)\geqslant 0.
 \end{equation}
 Since this applies for each $z\in B(x,\delta)$ we can take some $f\in C^\infty_0\left( B(x,\delta)\right)$ which is non-negative to deduce
 \begin{equation}\label{5.7}
  0 \leqslant\int_{B(x,\delta)}f(z)\left[ I(\vn+\vphi^z_\epsilon)-I(\vn)\right]\,dz.
 \end{equation}
 Now we can change variables using the coordinate transformation
 \begin{equation}\label{5.8}
  y=\frac{x-z}{\epsilon},
 \end{equation}
 so that when we multiply through by $\epsilon^{-d}$, \eqref{5.7} becomes
 \begin{equation}\label{5.9}
 \begin{split}
  0\leqslant &\int_{B(x,\delta)} \int_D f(z) F(z+\epsilon y,\vn(z+\epsilon y)+\epsilon\vpsi(y),\nabla \vn (z+\epsilon y)+\nabla_y \vpsi(y))\,dy\,dz \\
  &-\int_{B(x,\delta)} \int_D f(z)F(z+\epsilon y,\vn(z+\epsilon y),\nabla \vn(z+\epsilon y))\,dy\,dz.
 \end{split}
 \end{equation}
From here we swap the integrals and perform another change of variables $z'=z+\epsilon y$ to find
\begin{equation}\label{5.10}
\begin{split}
 0\leqslant &\int_D \int_{B(x,\delta)+\epsilon y}f(z'-\epsilon y) F(z',\vn(z')+\epsilon \vpsi(y),\nabla \vn(z')+\nabla_y \vpsi(y))\,dz'\,dy \\
 &-\int_D \int_{B(x,\delta)+\epsilon y} f(z'-\epsilon y)F(z',\vn(z'),\nabla\vn(z'))\,dz'\,dy.
 \end{split}
\end{equation}

It is clear that we would like to use the Dominated Convergence Theorem on the integral in \eqref{5.10}. We see that this is possible by rewriting the integral using the indicator function $\mathbbm{1}_\epsilon(z'):=\mathbbm{1}_{B(x,\delta)+\epsilon y}(z')$ as
\begin{equation}\label{5.11}
\begin{split}
 &\int_D\int_{B(x,\delta)+\epsilon y}f(z'-\epsilon y) F(z',\vn(z')+\epsilon \vpsi(y),\nabla \vn(z')+\nabla_y \vpsi(y))\, dz'dy \\
 &-\int_D\int_{B(x,\delta)+\epsilon y}f(z'-\epsilon y) F(z',\vn(z'),\nabla\vn(z'))\,dz'\,dy\\
 =&\int_D\int_{B(x,2\delta)}f(z'-\epsilon y)F(z',\vn(z')+\epsilon \vpsi(y),\nabla \vn(z')+\nabla_y \vpsi(y))\mathbbm{1}_\epsilon (z')\,dz'\,dy \\
 &-\int_D\int_{B(x,2\delta)} f(z'-\epsilon y) F(z',\vn(z'),\nabla\vn(z'))\mathbbm{1}_\epsilon (z')\,dz'\,dy.
 \end{split}
\end{equation}
Since $f$ is smooth, $F$ is continuous and $\vn\in W^{1,\infty}\left( \Omega, M \right)$, we can easily take the limit of this as $\epsilon\rightarrow 0$ to see that it converges to
\begin{equation}\label{5.12}
 \int_D\int_{B(x,\delta)} f(z)\left[F(z,\vn(z),\nabla\vn(z)+\nabla_y \vpsi(y))-F(z,\vn(z),\nabla\vn(z))\right]\,dz\,dy.
\end{equation}
Writing this in an alternative fashion we find
\begin{equation}\label{5.13}
\begin{split}
 &\int_{B(x,\delta)}f(z)\int_D F(z,\vn(z),\nabla\vn(z)+\nabla_y \vpsi(y))-F(z,\vn(z),\nabla\vn(z))\,dydz \\
 =& \int_{B(x,\delta)}f(z)g(z)\,dz\geqslant0.
 \end{split}
\end{equation}
However since this is true for every non-negative $f\in C^\infty_0\left( B(x,\delta)\right)$, we conclude, using a standard mollification argument, that $g(z)\geqslant 0$ a.e. in $B(x,\delta)$. Since $x$ was arbitrary, this gives us the result.
\qed
\end{proof}

Now we can use Theorem \ref{quasiconvexityunconstrained} to prove the more general constrained quasiconvexity result. We return to our original set of admissible functions $\mathcal{A}$ with the boundary condition $\vn_0 \in W^{1,1}\left( \Omega,M\right)$ once more.

\begin{theorem}[Quasiconvexity in the interior]\label{quasiconvexity}
Suppose that $\vn \in \mathcal{A}\cap W^{1,\infty}\left(\Omega,M\right)$ is a strong local minimizer of $I$. Then for a.e. $x\in \Omega$
\begin{equation}\label{5.14}
 F\left(x,\vn(x),\nabla \vn(x) \right) \leqslant \frac{1}{|D|} \int_D F\left(x,\vn(x), \nabla \vn(x) +\nabla P(\vn(x)) \nabla \vphi(y) \right) \,dy,
\end{equation}
for any open and bounded $D\subset \mathbb{R}^{d}$, $\vphi \in W^{1,\infty}_0 \left( D,\mathbb{R}^{k} \right)$.
\end{theorem}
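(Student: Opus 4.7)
The plan is to reduce to the unconstrained quasiconvexity result, Theorem \ref{quasiconvexityunconstrained}, applied to the auxiliary functional $J$ of \eqref{1.6}, whose Lagrangian can be read off as $G(x, z, A) = \psi(z)\, F(x, P(z), \nabla P(z)\, A)$. Since $G \in C(\overline{\Omega} \times \mathbb{R}^k \times \mathbb{R}^{k \times d})$, the continuity hypothesis \eqref{5.2} is met, so it suffices to show that $\vn$ is a strong local minimizer of $J$ on $\mathcal{B}$ and then simplify the resulting pointwise inequality using $\vn(x) \in M$ and the tangency of $\nabla \vn(x)$ to recover \eqref{5.14}.

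For the first step, reuse the projection estimate from the proof of Theorem \ref{Weak Necessary}. For $\vm \in \mathcal{B}$ with $\|\vn - \vm\|_\infty \leqslant \delta^*/2$, the bound \eqref{3.4} gives $\|\vn - P(\vm)\|_\infty \leqslant 2 \|\vn - \vm\|_\infty$, one has $P(\vm) \in \mathcal{A}$, and $\psi(\vm) \equiv 1$. Hence, if $\vn$ is a strong local minimizer of $I$ with $L^\infty$-radius $\epsilon > 0$, then whenever $\|\vn - \vm\|_\infty < \min(\epsilon/2,\, \delta^*/2)$ one obtains $J(\vm) = I(P(\vm)) \geqslant I(\vn) = J(\vn)$, so $\vn$ is indeed a strong local minimizer of $J$.

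Applying Theorem \ref{quasiconvexityunconstrained} to $J$ then produces a full-measure subset of $\Omega$ at each of whose points $x$
\[ G(x, \vn(x), \nabla \vn(x)) \leqslant \frac{1}{|D|} \int_D G(x, \vn(x), \nabla \vn(x) + \nabla_y \vphi(y))\, dy \]
holds for every open bounded $D \subset \mathbb{R}^d$ and every $\vphi \in W^{1,\infty}_0(D, \mathbb{R}^k)$. At any such $x$ with additionally $\vn(x) \in M$ and with the columns of $\nabla \vn(x)$ lying in $T_{\vn(x)} M$, one has $\psi(\vn(x)) = 1$, $P(\vn(x)) = \vn(x)$, and, by Remark \ref{Curvature remark}, $\nabla P(\vn(x))\, \nabla \vn(x) = \nabla \vn(x)$. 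Substituting these three facts into the explicit form of $G$ collapses the displayed inequality to exactly \eqref{5.14}.

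The one technical point is the tangency identity $\nabla P(\vn(x)) \nabla \vn(x) = \nabla \vn(x)$ on a set of full measure. Since $\vn \in W^{1,\infty}(\Omega, M)$, a chain-rule argument (together with the characterization in Remark \ref{Curvature remark} of $\nabla P$ on $M$ as the orthogonal projection onto the tangent space) shows that for a.e.\ $x$ the columns of $\nabla \vn(x)$ belong to $T_{\vn(x)} M$ and are therefore fixed by $\nabla P(\vn(x))$. Intersecting this full-measure set with the one produced by the application of Theorem \ref{quasiconvexityunconstrained} gives the a.e.\ statement \eqref{5.14}, completing the proof.
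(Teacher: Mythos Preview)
Your proposal is correct and follows essentially the same route as the paper: show that $\vn$ is a strong local minimizer of the auxiliary functional $J$ via the $L^\infty$ projection estimate, apply Theorem \ref{quasiconvexityunconstrained} to $G$, and then simplify using $\psi(\vn)=1$, $P(\vn)=\vn$, and $\nabla P(\vn)\nabla\vn=\nabla\vn$. If anything, you are slightly more careful than the paper in explicitly justifying the tangency identity $\nabla P(\vn(x))\nabla\vn(x)=\nabla\vn(x)$ a.e., which the paper uses without comment in passing from \eqref{5.17} to \eqref{5.19}.
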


\begin{proof}

We take $D$ and $\vphi$ as given in the statement. We know $\vn$ is a strong local minimizer of $I$ and returning to the logic in \eqref{3.4} we have shown that if we take some $\vm \in \mathcal{B}$, then 
\begin{equation}\label{5.15}
||\vn-\vm||_{\infty}<\frac{\delta^*}{2} \quad \Rightarrow \quad P(\vm)\in\mathcal{A} \quad \text{and} \quad \left|\left| \vn-P(\vm) \right|\right|_\infty <2\left|\left| \vn-\vm \right|\right|_\infty
\end{equation}
Therefore if $\left|\left| \vn-\vm \right|\right|_\infty$ is small enough
\begin{equation}\label{5.16}
J(\vm)=I\left( P(\vm) \right) \geqslant I(\vn)=J(\vn),
\end{equation}
so $\vn$ is a strong local minimizer of $J$. Therefore we can apply Theorem \ref{quasiconvexityunconstrained} to deduce that for a.e. $x \in \Omega$
\begin{equation}\label{5.17}
 G\left(x,\vn(x),\nabla\vn(x)\right) \leqslant \frac{1}{|D|}\int_D G\left( x,\vn(x),\nabla\vn(x)+\nabla\vphi(y)\right)\,dy.
\end{equation}
To relate this back to $F$ we note that from the definition of $J$ we have 
\begin{equation}\label{5.18}
 G(x,\vn,{\bf Q})=\psi(\vn)F\left(x,P(\vn),\nabla P(\vn){\bf Q}\right).
\end{equation}
By substituting this back into we \eqref{5.17} we obtain
\begin{equation}\label{5.19}
 \begin{split}
  F\left( x,\vn(x),\nabla\vn(x)\right) &\leqslant \frac{1}{|D|}\int_\Omega \psi(\vn(x))F\left( x,\vn(x),\nabla P(\vn(x))(\nabla\vn(x)+\nabla \vphi(y)\right) \\
  &=\frac{1}{|D|} \int_D F\left(x,\vn(x), \nabla \vn(x) +\nabla P(\vn(x)) \nabla \vphi(y) \right) \,dy.
 \end{split}
\end{equation}
\qed
\end{proof}

\begin{theorem}[Quasiconvexity at the boundary]\label{quasiconvexity-boundary}
 Suppose that $\vn \in \mathcal{A}\cap C^{1}\left(\overline{\Omega},M\right)$ is a strong local minimizer of $I$. The exterior unit normal to $\Omega$ at a point $x$ is given by $\nu(x)$. Then
 \begin{equation}\label{5.20}
 F\left(x,\vn(x),\nabla \vn(x) \right) \leqslant \frac{1}{|{\bf B}_{\nu(x)}^-|} \int_{{\bf B}_{\nu(x)}^-} F\left(x,\vn(x), \nabla \vn(x) +\nabla P(\vn(x)) \nabla \vphi(y) \right) \,dy,
\end{equation}
for all $\vphi\in  \left\{ \,\vphi \in W^{1,\infty} \left( {\bf B}_{\nu(x)}^-,\mathbb{R}^k \right)\, \left|\,\vphi=0 \,\,\text{on}\,\, \partial {\bf B}\cap \partial {\bf B}_{\nu(x)}^-\, \right.\right\}$ where 
\begin{equation}\label{5.21}
 {\bf B}_{\nu(x)}^-=\left\{\,y\in{\bf B}\,|\,y\cdot\nu(x)<0\,\right\}.
\end{equation}
This holds for $\mathcal{H}^{d-1}$ a.e. $x\in \partial \Omega_2$ where $\partial\Omega_2$ is locally $C^1$.
\end{theorem}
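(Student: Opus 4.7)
The plan is to mirror the argument of Theorem \ref{quasiconvexity}, reducing the constrained boundary quasiconvexity to an unconstrained boundary quasiconvexity condition applied to the auxiliary functional $J$, and then translating back to $F$ via the identity $G(x,\vn,\vQ)=\psi(\vn)F(x,P(\vn),\nabla P(\vn)\vQ)$.

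First I would observe, exactly as in \eqref{5.15}--\eqref{5.16}, that if $\vm\in\mathcal{B}$ with $\|\vn-\vm\|_\infty<\delta^*/2$ then $P(\vm)\in\mathcal{A}$ and $\|\vn-P(\vm)\|_\infty\leqslant 2\|\vn-\vm\|_\infty$, so $J(\vm)=I(P(\vm))\geqslant I(\vn)=J(\vn)$. Hence $\vn$ is a strong local minimizer of $J$ in $\mathcal{B}$, and it still satisfies $\vn\in C^1(\overline{\Omega},\mathbb{R}^k)$ with $\vn=\vn_0$ on $\partial\Omega_1$, so the classical Ball--Marsden boundary quasiconvexity condition for unconstrained $C^1$ strong local minimizers applies to $J$, yielding, for $\mathcal{H}^{d-1}$-a.e.\ $x\in\partial\Omega_2$ at which $\partial\Omega_2$ is locally $C^1$,
\begin{equation}
G\!\left(x,\vn(x),\nabla\vn(x)\right)\leqslant \frac{1}{|{\bf B}_{\nu(x)}^-|}\int_{{\bf B}_{\nu(x)}^-}\! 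G\!\left(x,\vn(x),\nabla\vn(x)+\nabla\vphi(y)\right)dy
\end{equation}
for all admissible $\vphi$. If one prefers a self-contained derivation, the same inequality can be obtained by adapting the scheme of Theorem \ref{quasiconvexityunconstrained}: use a $C^1$ chart to locally straighten $\partial\Omega_2$ near $x$, centre the rescaling $\vphi^z_\epsilon(y):=\epsilon\vpsi((y-z)/\epsilon)$ at points $z$ of the flattened boundary, average against a non-negative $f\in C_0^\infty$ of that chart, and pass to the limit via dominated convergence as in \eqref{5.10}--\eqref{5.13}; the rescaled test-function domain is then precisely ${\bf B}_{\nu(x)}^-$.

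Third I would translate the inequality back to $F$. Since $\vn(x)\in M$, we have $\psi(\vn(x))=1$ and $P(\vn(x))=\vn(x)$, and since each column of $\nabla\vn(x)$ is tangent to $M$ at $\vn(x)$, the orthogonal projection identity $\nabla P(\vn(x))\nabla\vn(x)=\nabla\vn(x)$ (Remark \ref{Curvature remark}) gives
\begin{equation}
G(x,\vn(x),\nabla\vn(x)+\nabla\vphi(y))=F\!\left(x,\vn(x),\nabla\vn(x)+\nabla P(\vn(x))\nabla\vphi(y)\right),
\end{equation}
together with $G(x,\vn(x),\nabla\vn(x))=F(x,\vn(x),\nabla\vn(x))$, so substitution yields \eqref{5.20}.

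The main obstacle is the boundary localization underlying step two: one must rescale test functions whose supports straddle $\partial\Omega_2$ so that after dilation by $\epsilon^{-1}$ they live in the half-ball ${\bf B}_{\nu(x)}^-$. The $C^1$ regularity of $\partial\Omega_2$ is what permits a boundary-straightening change of variables with Jacobian $\to I$ as $\epsilon\to 0$, and the $C^1$ regularity of $\vn$ is what makes the error terms from $\vn(z+\epsilon y)-\vn(z)$ and from the boundary chart vanish uniformly in the limit. Once these ingredients are in place the indicator-function trick and dominated-convergence argument of Theorem \ref{quasiconvexityunconstrained} carry through verbatim with ${\bf B}_{\nu(x)}^-$ in place of $D$, and the transfer from $G$ to $F$ is identical to that in Theorem \ref{quasiconvexity}.
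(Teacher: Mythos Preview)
Your proposal is correct and follows essentially the same route as the paper: reduce to $\vn$ being a strong local minimizer of $J$, invoke the Ball--Marsden boundary quasiconvexity result for the unconstrained functional to obtain \eqref{5.22}, and translate back to $F$ via \eqref{5.18}. The additional self-contained derivation you sketch and the explicit mention of $\nabla P(\vn(x))\nabla\vn(x)=\nabla\vn(x)$ are extra detail, but the argument is the paper's.
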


\begin{remark}\label{remark4}
 Note that this condition only holds on the free boundary, $\partial\Omega_2$, of our problem and not on the entirety of $\partial \Omega$.
\end{remark}

\begin{proof}

In this instance we will apply the unconstrained result of Ball and Marsden \cite{ball1986quasiconvexity} to $J$. From the proof of Theorem \ref{quasiconvexity} we know that since $\vn$ is a strong local minimizer of $I$ it is also a strong local minimizer of $J$. Hence \cite{ball1986quasiconvexity} implies
\begin{equation}\label{5.22}
 G\left(x,\vn(x),\nabla \vn(x) \right) \leqslant \frac{1}{|{\bf B}_\nu^-|} \int_{{\bf B}_\nu^-} G\left(x,\vn(x), \nabla \vn(x) +\nabla \vphi(y) \right) \,dy,
\end{equation}
for $\mathcal{H}^{d-1}$ a.e. $x\in\partial\Omega_2$ and $\vphi$ as given in the statement. Now we can proceed with exactly the same logic as in the previous proof, using \eqref{5.18} and \eqref{5.19} to obtain the assertion.
\qed
\end{proof}

\vspace{3mm}

With our necessary quasiconvexity conditions in place we return to assuming that our Lagrangian is twice continuously differentiable in its second two arguments. The following corollary shows that quasiconvexity in the interior implies the more classical result of Weierstrass. This is intuitively understood from the fact that the Weierstrass condition is in reality a rank-one convexity condition in the interior which is weaker than quasiconvexity. This corollary motivates why in the next section we only need to strengthen the quasiconvexity conditions in order to prove the fundamental strong local minimum sufficiency result.

\begin{corollary}[Weierstrass Condition]\label{Weierstrass}
 Suppose that $\vn \in \mathcal{A}\cap W^{1,\infty}\left(\Omega,M\right)$ is a strong local minimizer of $I$. Then for a.e. $x\in \Omega$
 \begin{equation}\label{5.23}
  \mathcal{E}_F\left(x,\vn,\nabla \vn,\nabla\vn+\left[ \nabla P(\vn)\zeta\right]\otimes \eta\right) \geqslant 0
 \end{equation}
 for any $\zeta \in \mathbb{R}^k$ and $\eta \in \mathbb{R}^d$.
\end{corollary}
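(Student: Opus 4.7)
The plan is to apply Theorem~\ref{quasiconvexity} to a suitably chosen laminated scalar-valued test function in order to extract rank-one convexity of the map $q\mapsto F(x_0,\vn(x_0),q)$ at $q=\nabla\vn(x_0)$ in the direction $[\nabla P(\vn(x_0))\zeta]\otimes\eta$, and then to pass to an elementary one-parameter limit to obtain the Weierstrass inequality. Fix a point $x_0\in\Omega$ at which \eqref{5.14} holds and abbreviate $A=\nabla\vn(x_0)$, $P_0=\nabla P(\vn(x_0))$, $\xi=[P_0\zeta]\otimes\eta$, and $f(q)=F(x_0,\vn(x_0),q)$. The target inequality then reads
\begin{equation}
    f(A+\xi)-f(A)-f_q(A):\xi \;\geq\; 0,
\end{equation}
where $f_q$ denotes the derivative of $f$ in its gradient argument.

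The key structural observation is that if one takes the test function in \eqref{5.14} of the scalar-valued form $\vphi(y)=\psi(y)\zeta$ with $\psi\in W^{1,\infty}_0(D,\mathbb{R})$, then
\begin{equation}
    P_0\nabla\vphi(y)=[P_0\zeta]\otimes\nabla\psi(y),
\end{equation}
so the projection $P_0$ passes cleanly through the tensor product and only a \emph{scalar} laminate for $\psi$ is required. For each fixed $t\in(0,1)$ I would construct, on a slab-shaped domain $D\subset\mathbb{R}^d$ aligned with $\eta$, a sequence $\psi_t^{(j)}\in W^{1,\infty}_0(D,\mathbb{R})$ via the standard sawtooth-plus-transverse-cutoff recipe, with $\|\nabla\psi_t^{(j)}\|_\infty$ bounded independently of $j$, so that the sets $\{y:\nabla\psi_t^{(j)}(y)=\eta\}$ and $\{y:\nabla\psi_t^{(j)}(y)=-(1-t)\eta/t\}$ occupy relative measures tending to $1-t$ and $t$ respectively, with the transition layer having measure tending to zero. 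Substituting $\vphi_t^{(j)}=\psi_t^{(j)}\zeta$ into \eqref{5.14} and applying dominated convergence together with continuity of $F$ yields the rank-one convex splitting
\begin{equation}
    f(A) \;\leq\; (1-t)\,f(A+\xi) + t\,f\!\left(A - \tfrac{1-t}{t}\,\xi\right).
\end{equation}

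Setting $\tau=(1-t)/t$ and rearranging gives $\tau\bigl[f(A)-f(A+\xi)\bigr]\leq f(A-\tau\xi)-f(A)$. Dividing by $\tau$ and letting $\tau\to 0^+$, the $C^2$ regularity of $F$ in its gradient argument converts the right-hand side into $-f_q(A):\xi$, and the Weierstrass inequality at $x_0$ follows. Since $x_0$ ranges over a set of full measure, we obtain the conclusion for a.e.\ $x\in\Omega$. The main obstacle is verifying that the transition-layer contribution in the laminate construction genuinely vanishes in the integrated quasiconvexity inequality so that the clean two-value splitting survives in the limit; the constrained character of the problem causes no additional difficulty thanks to the identity $P_0\nabla(\psi\zeta)=[P_0\zeta]\otimes\nabla\psi$, which reduces everything to the classical unconstrained lamination argument.
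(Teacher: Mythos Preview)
Your proposal is correct and follows essentially the same route as the paper: invoke Theorem~\ref{quasiconvexity}, extract rank-one convexity of $q\mapsto F(x_0,\vn(x_0),q)$ at $\nabla\vn(x_0)$ in the direction $[\nabla P(\vn(x_0))\zeta]\otimes\eta$, and then pass to a one-parameter limit to obtain the Weierstrass inequality. The only difference is packaging: where you build the sawtooth laminate by hand and use the identity $P_0\nabla(\psi\zeta)=[P_0\zeta]\otimes\nabla\psi$ to reduce to a scalar construction, the paper instead introduces the auxiliary function $\tilde F(\vQ):=F(x_0,\vn(x_0),\nabla P(\vn(x_0))\vQ)$, observes that \eqref{5.14} says precisely that $\tilde F$ is quasiconvex at $\nabla\vn(x_0)$ (using $\nabla P(\vn)\nabla\vn=\nabla\vn$), and then simply cites the standard implication ``quasiconvex $\Rightarrow$ rank-one convex'' rather than reproving it. This absorbs the projection into the definition of $\tilde F$ and sidesteps the explicit laminate construction and its transition-layer estimate; your final limiting step and the paper's differentiation at $t=0$ are the same computation.
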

 
\begin{proof}
 
 By appealing to Theorem \ref{quasiconvexity} we know that \eqref{5.14} holds for a.e. $x\in\Omega$. Now for ${\bf Q}\in \mathbb{R}^{k\times d}$ we define
 \begin{equation}\label{5.24}
  \tilde{F}({\bf Q}):=F\left(x,\vn(x),\nabla P(\vn(x)){\bf Q} \right).
 \end{equation}
 Then using this new entity $\tilde{F}$, \eqref{5.14} simply says that 
 \begin{equation}\label{5.25}
  \frac{1}{|D|}\int_D \tilde{F}(\nabla\vphi(y))\,dy\geqslant \tilde{F}(0),
 \end{equation}
 for all $\vphi \in W^{1,\infty}_0\left(D,\mathbb{R}^k\right)$. Therefore $\tilde{F}$ is quasiconvex at $\nabla \vn(x)$ (for almost every $x$). We take some $\zeta \otimes \eta \in \mathbb{R}^{k\times d}$, then for $t\in [0,1)$, define
 \begin{equation}\label{5.26}
   A_1^t := \nabla \vn + (1-t) \zeta \otimes \eta, \quad
   A_2^t := \nabla \vn - t \zeta \otimes \eta.
 \end{equation}
 Then $tA_1^t +(1-t)A_2^t = \nabla \vn $ and $A_1^t - A_2^t = \zeta \otimes \eta$. We also know that quasiconvexity implies rank-one convexity \cite{dacorogna2008direct} so that from Definition \ref{def5}
 \begin{equation}\label{5.27}
  \tilde{F}(\nabla \vn)\leqslant t \tilde{F}(A_1^t)+(1-t)\tilde{F}(A_2^t)
 \end{equation}
 Hence by rewriting \eqref{5.27} and scaling $\zeta$, we find
 \begin{equation}\label{5.28}
  \tilde{F}(\nabla \vn) \leqslant t \tilde{F}(\nabla \vn + \zeta \otimes \eta) + (1-t) \tilde{F}\left( \nabla \vn - \frac{t}{1-t} \zeta \otimes \eta \right) .
 \end{equation}

To finish we notice that the right side of \eqref{5.28} is a continuously differentiable function in $t\in [0,1)$ which achieves its lower bound at $t=0$. Thus it has non-negative derivative at zero and we differentiate to find
\begin{equation}\label{5.29}
 \begin{split}
  0 & \leqslant \tilde{F}(\nabla \vn + \zeta\otimes \eta)-\tilde{F}(\nabla \vn) -\sum_{i,j} \tilde{F}_{{\bf Q}_{i,j}}(\nabla \vn)\zeta_i\eta_j \\
  &= \tilde{F}(\nabla \vn+\zeta\otimes \eta)-\tilde{F}(\nabla \vn)-\sum_{i,j}\left[ \sum_{\alpha,\beta} F_{\vn_{\alpha,\beta}}(x,\vn,\nabla\vn)P_{\alpha,i}(\vn)\delta_{\beta\,j} \right]\zeta_i\eta_j \\  &=
   F(x,\vn,\nabla\vn+\left[ \nabla P(\vn)\zeta\right]\otimes\eta)-F(x,\vn,\nabla\vn) 
   -F_{\nabla\vn}(x,\vn,\nabla\vn):\left[ \nabla P(\vn)\zeta\right]\otimes\eta,
 \end{split} 
\end{equation}
which is what we set out to prove. 
 \qed
\end{proof}

%
%
%
%
%
%

\section{Sufficient conditions for a strong local minimizer}\label{sec:strong sufficient}

Before we prove our strong sufficiency theorem using the result of Grabovsky \& Mengesha we prove a preliminary theorem based on a result by Taheri \cite{taheri2001sufficiency}. It shows that if we assume a very strong Weierstrass condition then there is almost no distinction between weak local minimizers and strong local minimizers. For these sufficiency results we need to assume some extra regularity conditions for our Lagrangian F, the first of which is a differentiability condition

\vspace{3mm}
\noindent
(L1) $F \in C^2\left(U \times O \times \mathbb{R}^{k\times d}\right)$ where $U$ and $O$ are open neighbourhoods of $\overline{\Omega}$ and $M$ respectively.

\begin{theorem}\label{Strong sufficient Taheri}
 
 Let $\vn \in\mathcal{A}\cap C^1\left(\overline{\Omega},M\right)$ and for $v\in \text{Var}_\mathcal{A}$ define (see \eqref{1.10})
 \begin{equation}\label{6.1}
  g(t)=I\left(P(\vn+t\vv)\right)=I(\vw_t).
 \end{equation}
 Suppose that the Lagrangian $F$ satisfies (L1) and there exists some $\gamma>0$ such that
 \begin{equation}\label{6.2}
  g'(0)=0\quad\text{and}\quad g''(0)\geqslant \gamma ||\vw'_0||_{1,2}^2\quad \forall \vv \in \text{Var}_{\mathcal{A}}.
 \end{equation}
 In addition, suppose that
 \begin{equation}\label{6.3}
  \mathcal{E}_F(x,\vm,\vQ_1,\vQ_2)\geqslant \gamma |\vQ_1-\vQ_2|^2
 \end{equation}
 for any $x \in \overline{\Omega}$, $\vm \in M$, and $\vQ_1,\vQ_2 \in \mathbb{R}^{k\times d}$. Then $\vn$ is a strict strong local minimizer of $I$.
 
 \end{theorem}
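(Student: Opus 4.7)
The plan is to apply Taheri's unconstrained strong sufficiency result \cite[Theorem 3.3]{taheri2001sufficiency} to the auxiliary functional $K$ defined in \eqref{1.7}, and then transfer the conclusion back to $I$ using the fact that $K$ and $I$ agree on $\mathcal{A}$. Concretely, I would verify three ingredients for $\vn$ and the Lagrangian $H$: the vanishing of the first variation of $K$, a strictly positive second variation of $K$, and a uniform pointwise Weierstrass growth condition for $H$ along (a neighborhood of) the graph of $\vn$.

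The first two ingredients are essentially free from the proof of Theorem \ref{Weak sufficient}: the computation \eqref{4.4} gives $\delta K(\vn)(\vv) = 0$ for every $\vv \in \text{Var}_{\mathcal{A}}$, and the coercivity estimate \eqref{4.6} yields $\delta^2 K(\vn)(\vv,\vv) \geqslant \frac{2\gamma}{2+\gamma}\|\vv\|_{1,2}^2$. I would simply cite that calculation rather than redo it.

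The main work, and the principal obstacle, is to extract a uniform Weierstrass condition for $H$ from the hypothesis \eqref{6.3} on $F$. I would split $H = G + R$ with
\[
R(x,\vm,\vQ) := \psi(\vm)\bigl[|\vm - P(\vm)|^2 + |(I - \nabla P(\vm))\vQ|^2\bigr],
\]
which is quadratic in $\vQ$. Since the Weierstrass excess in $\vQ$ is additive, a chain-rule calculation yields
\begin{align*}
\mathcal{E}_G(x,\vm,\vQ_1,\vQ_2) &= \psi(\vm)\,\mathcal{E}_F\bigl(x, P(\vm), \nabla P(\vm)\vQ_1, \nabla P(\vm)\vQ_2\bigr), \\
\mathcal{E}_R(x,\vm,\vQ_1,\vQ_2) &= \psi(\vm)\,\bigl|(I - \nabla P(\vm))(\vQ_2 - \vQ_1)\bigr|^2.
\end{align*}
Evaluating along the graph of $\vn$, where $P(\vn) = \vn$, $\psi(\vn) \equiv 1$, and $\nabla P(\vn(x))$ is the orthogonal projection onto $T_{\vn(x)}M$ (cf.\ Remark \ref{Curvature remark}), the hypothesis \eqref{6.3} gives $\mathcal{E}_G \geqslant \gamma|\nabla P(\vn)(\vQ_2 - \vQ_1)|^2$, and the columnwise orthogonal decomposition then produces
\[
\mathcal{E}_H(x,\vn(x),\vQ_1,\vQ_2) \geqslant \min\{1,\gamma\}\,|\vQ_2 - \vQ_1|^2.
\]
By continuity of $\psi$, $P$, $\nabla P$ and $F_\vQ$, together with $\vn \in C^1(\overline{\Omega},M)$, this estimate persists (with a slightly smaller constant) on a uniform $C^0$-neighborhood of the graph of $\vn$, which is what Taheri's result requires. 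The slightly subtle part here is that the orthogonality $|\Pi A|^2 + |(I-\Pi)A|^2 = |A|^2$ is exact only on $M$, so extending the inequality off $M$ requires a continuity/compactness argument on the range of $\vn$.

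Having verified the three hypotheses, \cite[Theorem 3.3]{taheri2001sufficiency} applied to $K$ yields an $\epsilon>0$ such that $K(\vm) > K(\vn)$ for every $\vm \in \mathcal{B}$ with $0 < \|\vm - \vn\|_\infty < \epsilon$. For any such $\vm$ that happens to lie in $\mathcal{A}$, one has $P(\vm) = \vm$ and $\psi(\vm) \equiv 1$, so the auxiliary terms in \eqref{1.7} vanish and $K(\vm) = I(\vm)$, $K(\vn) = I(\vn)$. The strict strong local minimality for $K$ therefore transfers directly, proving that $\vn$ is a strict strong local minimizer of $I$.
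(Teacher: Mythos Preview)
Your proposal is correct and follows the same architecture as the paper: pass to the auxiliary functional $K$, import the first- and second-variation estimates from the proof of Theorem~\ref{Weak sufficient}, verify a uniform Weierstrass growth for $H$, invoke Taheri, and transfer back via $K|_{\mathcal{A}} = I$.

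The only noteworthy difference is in how the Weierstrass bound for $H$ is obtained off the graph of $\vn$. You prove $\mathcal{E}_H \geqslant \min\{1,\gamma\}|\vQ_2-\vQ_1|^2$ at points $\vm=\vn(x)\in M$ using the orthogonal decomposition $|\Pi A|^2+|(I-\Pi)A|^2=|A|^2$, and then extend to nearby $\vm$ by continuity of the minimum eigenvalue of the quadratic form $A\mapsto \gamma|\nabla P(\vm)A|^2+|(I-\nabla P(\vm))A|^2$ together with compactness of $\vn(\overline{\Omega})$. This works, but the paper avoids the perturbation step entirely: it observes that your formulas for $\mathcal{E}_G$ and $\mathcal{E}_R$ are already valid at \emph{any} $\vm$ with $\psi(\vm)=1$, applies \eqref{6.3} at $P(\vm)\in M$ (not just at $\vn(x)$), and then uses the completing-the-square identity
\[
\gamma|\Pi A|^2+|(I-\Pi)A|^2=\frac{\gamma}{\gamma+1}|A|^2+\frac{1}{\gamma+1}\bigl|(I-(\gamma+1)\Pi)A\bigr|^2 \geqslant \frac{\gamma}{\gamma+1}|A|^2,
\]
which holds for an \emph{arbitrary} linear map $\Pi$, not only for orthogonal projections. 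This yields the constant $\gamma/(\gamma+1)$ directly on the whole $L^\infty$-neighbourhood, with no continuity or compactness needed. Your route costs an extra argument but buys the slightly better constant $\min\{1,\gamma\}$ on the graph; the paper's route is shorter and structurally identical to the manipulation in \eqref{4.6} and \eqref{6.19}.
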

 
 \begin{remark}
  This theorem shows that any $C^1$ weak local minimizer is in fact a strong local minimizer if the Lagrangian is convex with respect to the gradient term.
 \end{remark}
 
 \begin{proof}
 
  As this is a sufficiency proof we will be using the functional $K$ as the unconstrained problem which is related to $I$. We will show that $K$ satisfies all of the required conditions to apply Taheri's sufficiency result \cite[Theorem 3.3]{taheri2001sufficiency}. Recall from the proof of Theorem \ref{Weak sufficient} that our assumptions immediately imply that 
  \begin{equation}\label{6.4}
   \delta K(\vn)(\vv)=0 \,\,\,\,\text{and}\,\,\,\, \delta^2 K(\vn)(\vv,\vv)\geqslant \frac{2\gamma}{2+\gamma} ||\vv||_{1,2}^2
  \end{equation}
  for every test function $\vv\in \text{Var}_{\mathcal{B}}$. To be able to apply Taheri's result we now need to show what strengthened Weierstrass condition is satisfied by $H$. We choose $0<\epsilon<\frac{\delta^*}{2}$ so that if $|\vm-\vn(x)|<\epsilon$ then $\psi(\vm)=1$. Then for any $x \in \overline{\Omega}$, $|\vm-\vn(x)|<\epsilon$ and $\vQ \in \mathbb{R}^{k\times d}$ we have 
  \begin{equation}\label{6.5}
   \begin{split}
    &\mathcal{E}_H\left(x,\vm,\nabla\vn(x),\vQ\right) \\
    =\quad&H(x,\vm,\vQ)-H(x,\vm,\nabla\vn)-H_{\nabla\vm}(x,\vm,\nabla\vn):(\vQ-\nabla\vn) \\ =\quad&
     F(x,P(\vm),\nabla P(\vm) \vQ)+|(I-\nabla P(\vm))\vQ|^2 \\ &
    -F(x,P(\vm),\nabla P(\vm)\nabla\vn)-|(I-\nabla P(\vm))\nabla \vn|^2 \\ &
    -F_{\nabla\vn}(x,P(\vm),\nabla P(\vm)\nabla \vn):\left(\nabla P(\vm)\vQ-\nabla P(\vm)\nabla \vn\right)\\&
    -2((I-\nabla P(\vm))^T(I-\nabla P(\vm))\nabla \vn):(\vQ-\nabla \vn) \\ =\quad&
     \mathcal{E}_F(x,\vm,\nabla P(\vm)\nabla \vn,\nabla P(\vm) \vQ)+ |(I-\nabla P(\vm))(\vQ-\nabla \vn)|^2 \\ \geqslant\quad&
     \gamma|\nabla P(\vm)(\vQ-\nabla \vn)|^2+|(I-\nabla P(\vm))(\vQ-\nabla \vn)|^2 \\ =\quad&
     \frac{\gamma}{\gamma+1} |\vQ-\nabla \vn|^2+\frac{1}{\gamma+1}|(I-(\gamma+1)\nabla P(\vm))(\vQ-\nabla \vn)|^2 \\ \geqslant\quad&
     \frac{\gamma}{\gamma+1} |\vQ-\nabla \vn|^2.
   \end{split}
  \end{equation}

  This means that we can apply \cite[Theorem 3.3]{taheri2001sufficiency} to deduce that there exist two constants, $\sigma_1,\sigma_2>0$ such that if $\vm \in \mathcal{B}$ with $||\vm-\vn||_\infty < \sigma_1$ then 
  \begin{equation}\label{6.6}
   K(\vm)-K(\vn) \geqslant \sigma_2 ||\vm-\vn||_{1,2}^2.
  \end{equation}
  Therefore $\vn$ is a strict strong local minimizer of $K$. As $\mathcal{A}\subset \mathcal{B}$ this means that $\vn$ is also a strict strong local minimizer of $I$.
  \qed
 \end{proof}

 \vspace{3 mm}

While Theorem \ref{Strong sufficient Taheri} is very useful, we do not want to restrict ourselves to just considering Lagrangians satisfying \eqref{6.3}. So to finish this section we will be applying the recently proved sufficiency result of Grabovsky \& Mengesha \cite{grabovsky2009sufficient} which requires much weaker quasiconvexity assumptions. However in order to use their result we need to set up our problem in a slightly different way and we refer the reader to their paper for full details. We have four conditions that we will assume our Lagrangian satisfies: the differentiability condition (L1) (see above), a growth condition, a coercivity condition and a uniform continuity condition.

\vspace{3 mm}
\noindent
(L2) There exists a constant $C>0$ and $p\geqslant 2$ such that for all ${\bf Q}\in\mathbb{R}^{k\times d}$ and $|\vn|=1$
\begin{equation}\label{6.7}
 |F(x,\vn,{\bf Q})|\leqslant C\left(1+|{\bf Q}|^p\right),
\end{equation}
\begin{equation}\label{6.8}
 |F_{\bf Q}(x,\vn,{\bf Q})|\leqslant C\left(1+|{\bf Q}|^{p-1}\right) \quad \text{and} \quad |F_{\vn}(x,\vn,{\bf Q})|\leqslant C\left(1+|{\bf Q}|^{p}\right).
\end{equation}

\vspace{3 mm}
\noindent
(L3) We assume that $F$ is bounded below, and that we have $C_1>0$ and $C_2>0$ such that
\begin{equation}\label{6.9}
 \int_\Omega F(x,\vm(x),\nabla\vm(x))\,dx \geqslant C_1||\vm||_{1,p}^p-C_2,
\end{equation}
for all $\vm\in \mathcal{A}$. If $p>2$, we assume in addition that there exists some $D_1>0$ and $D_2>0$, such that for every $\vm \in \mathcal{A}$ with $||\vm-\vn||_{\infty}\leqslant \frac{\delta^*}{2}$ we have
\begin{equation}\label{6.10}
 \int_\Omega F\left(x,\vm,\nabla\vm \right)-F(x,\vn,\nabla\vn)\,dx \geqslant D_1||\vm-\vn||_{1,p}^p-D_2||\vm-\vn||_{1,2}^2.
\end{equation}

\vspace{3 mm}
\noindent
(L4) For every $\epsilon>0$ there exists a $\delta>0$ such that for every ${\bf Q}\in \mathbb{R}^{k\times d}$, $|\vn|=1$, and $x_1,x_2\in \overline{\Omega}$ with $|x_1-x_2|\leqslant \delta$  
\begin{equation}\label{6.11}
 \frac{|F(x_1,\vn,{\bf Q})-F(x_2,\vn,{\bf Q})|}{1+|{\bf Q}|^p} < \epsilon.
\end{equation}

\begin{remark}\label{remark5}
 These assumptions are slightly different to those in \cite{grabovsky2009sufficient} but when we relate them to another functional they do come into line with those used in their result. In (L3) to avoid any issues with the projection map we assumed that $||\vm-\vn||_\infty<\frac{\delta^*}{2}$ but since we are investigating $L^\infty$ local behaviour this does not lose us any generality.
\end{remark}

\begin{theorem}\label{Strong sufficiency}
 
 Suppose that $\Omega$ is a $C^1$ bounded domain and the Lagrangian $F$ satisfies (L1)-(L4). Assume that $\vn\in \mathcal{A}\cap C^1\left( \Omega,M\right)$ with the following set of assumptions. There exists some $\gamma>0$ such that for all $\vv \in \text{Var}_{\mathcal{A}}$
 \begin{equation}\label{6.12}
  g'(0)=0\quad \text{and} \quad g''(0)\geqslant \gamma||\vw'_0||_{1,2}^2.
 \end{equation}
 Suppose that for every open, bounded set $D\subset \mathbb{R}^d$, $\vphi \in W^{1,\infty}_0\left( D,\mathbb{R}^k \right)$ and $x\in\Omega$
 \begin{equation}\label{6.13}
 \begin{split}
  \int_D \mathcal{E}_F\left( x,\vn(x),\nabla\vn(x),\nabla\vn(x)+\nabla P(\vn(x))\nabla\vphi(y) \right)  \,dy  & \\ \geqslant \gamma\int_D \left| \nabla P(\vn)\nabla\vphi(y) \right|^2\,dy.
 \end{split}
 \end{equation}
 Suppose also that
 \begin{equation}\label{6.14}
 \begin{split}
  \int_{{\bf B}_\nu^-}\mathcal{E}_F\left( x,\vn(x),\nabla\vn(x),\nabla\vn(x)+\nabla P(\vn(x))\nabla\vphi(y) \right) \,dy &\\ \geqslant \gamma \int_{{\bf B}_\nu^-} \left| \nabla P(\vn)\nabla\vphi(y) \right|^2\,dy
 \end{split}
 \end{equation}
 for all $\vphi\in \left\{ \,\vphi \in W^{1,\infty} \left( {\bf B}_{\nu(x)}^-,\mathbb{R}^k \right)\,\left|\,\vphi=0 \,\,\text{on}\,\, \partial {\bf B}\cap \partial {\bf B}_{\nu(x)}^-\,\right.\right\}$ and  $ x\in \partial \Omega_2$, where ${\bf B}_{\nu(x)}^-$ is given in \eqref{5.21}. Then $\vn$ is a strong local minimizer of $I$.
 \end{theorem}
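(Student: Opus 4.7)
The plan is to apply Grabovsky \& Mengesha's sufficiency theorem \cite{grabovsky2009sufficient} to the unconstrained functional $K$ from \eqref{1.7} at the point $\vn$, and then transfer the conclusion back using $\mathcal{A}\subset\mathcal{B}$ together with $K|_{\mathcal{A}}=I$. This mirrors the structure of Theorem \ref{Strong sufficient Taheri}, with the pointwise Weierstrass hypothesis \eqref{6.3} replaced by the weaker integral quasiconvexity conditions \eqref{6.13}--\eqref{6.14}. The variational conditions on $K$ are cheap: the calculations \eqref{4.4}--\eqref{4.5} used in the proof of Theorem \ref{Weak sufficient} carry over verbatim and give $\delta K(\vn)(\vv)=0$ together with $\delta^2 K(\vn)(\vv,\vv)\geqslant\tfrac{2\gamma}{2+\gamma}\|\vv\|_{1,2}^2$ for every $\vv\in\text{Var}_{\mathcal{B}}$.

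The main quantitative work is then to translate the quasiconvexity hypotheses on $F$ into the corresponding hypotheses on $H$. Since $\vn(x)\in M$ forces $\psi(\vn)=1$, $P(\vn)=\vn$, and $\nabla P(\vn)$ to be the orthogonal projection onto $T_{\vn(x)}M$ (so that $(I-\nabla P(\vn))\nabla\vn=0$), a computation analogous to \eqref{6.5} yields
\begin{equation*}
\mathcal{E}_H(x,\vn,\nabla\vn,\nabla\vn+\nabla\vphi) = \mathcal{E}_F(x,\vn,\nabla\vn,\nabla\vn+\nabla P(\vn)\nabla\vphi) + |(I-\nabla P(\vn))\nabla\vphi|^2.
\end{equation*}
Integrating over $D$, invoking \eqref{6.13}, and using the orthogonal decomposition $|\nabla\vphi|^2 = |\nabla P(\vn)\nabla\vphi|^2 + |(I-\nabla P(\vn))\nabla\vphi|^2$ gives the strict integral quasiconvexity
\begin{equation*}
\int_D \mathcal{E}_H(x,\vn,\nabla\vn,\nabla\vn+\nabla\vphi)\,dy \geqslant \min(\gamma,1) \int_D |\nabla\vphi|^2\,dy,
\end{equation*}
which is the interior condition demanded by \cite{grabovsky2009sufficient}; the identical argument on ${\bf B}_{\nu(x)}^-$, starting from \eqref{6.14}, produces the corresponding boundary condition.

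It remains to check that $H$ inherits the structural conditions (L1)--(L4). Conditions (L1) and (L4) transfer directly from $F$ via the $C^3$ regularity of $P$ (Lemma \ref{projectionregularity}) and the smoothness of $\psi$, while (L2) uses in addition the uniform boundedness of $\nabla P$ on $\text{supp}(\psi)$ and the quadratic form of the penalty term. The main obstacle is the coercivity (L3) for $H$: one must exploit the penalty $|(I-\nabla P(\vm))\nabla\vm|^2$, which bounds the component of $\nabla\vm$ normal to $T_{P(\vm)}M$ in $L^2$, and combine it with the coercivity of $F$ applied through $P(\vm)$, which bounds the tangential component $\nabla P(\vm)\nabla\vm$ in $L^p$, in order to recover the required $\|\vm\|_{1,p}^p$ lower bound on $K$ on the $L^\infty$-neighbourhood of $\vn$ where $\psi(\vm)\equiv 1$. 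Once all of this is assembled, \cite{grabovsky2009sufficient} yields some $\sigma>0$ with $K(\vm)\geqslant K(\vn)$ for every $\vm\in\mathcal{B}$ satisfying $\|\vm-\vn\|_\infty<\sigma$, and since $K=I$ on $\mathcal{A}\subset\mathcal{B}$ we conclude that $\vn$ is a strong local minimizer of $I$.
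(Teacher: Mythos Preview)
Your overall architecture is right and matches the paper's: transfer the hypotheses to an unconstrained functional, invoke Grabovsky--Mengesha, and restrict back to $\mathcal{A}$. The first- and second-variation computations and the quasiconvexity transfer are essentially correct (your orthogonal-decomposition argument giving the constant $\min(\gamma,1)$ is in fact a slight simplification of the paper's completing-the-square calculation yielding $\gamma/(1+\gamma)$).

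The genuine gap is the coercivity step, and the paper is explicit that the approach you sketch fails there. You propose to verify (L3) for $H$ by combining the $L^2$ control of the normal component $(I-\nabla P(\vm))\nabla\vm$ coming from the quadratic penalty in $K$ with the $L^p$ control of the tangential component $\nabla P(\vm)\nabla\vm$ coming from the coercivity of $F$. But when $p>2$ this cannot produce a lower bound of the form $C_1\|\vm\|_{1,p}^p - C_2$: an $L^2$ bound on one component and an $L^p$ bound on the other simply do not add up to an $L^p$ bound on the sum. Concretely, a sequence $\vm_j$ whose gradients blow up in $L^p$ purely in the normal direction while staying bounded in $L^2$ would have $K(\vm_j)$ bounded, so $K$ is not $W^{1,p}$-coercive and Grabovsky--Mengesha does not apply to it as stated.

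The paper repairs this by replacing $K$ with
\[
\tilde K(\vm) = K(\vm) + \int_\Omega \psi(\vm)\left( |\vm-P(\vm)|^p + |\nabla\vm-\nabla[P(\vm)]|^p \right) dx,
\]
which adds an $L^p$ penalty on the normal component without disturbing the first and second variations at $\vn$ (since $p>2$). Even then the paper does not verify the Grabovsky--Mengesha coercivity hypothesis for $\tilde H$ directly; instead it uses \eqref{6.9}--\eqref{6.10} to reproduce by hand the reduction in \cite[Section 7]{grabovsky2009sufficient} from strong variations to $W^{1,p}$-small variations, via a case analysis on whether $\|\nabla\vm_j\|_p\to\infty$, whether $\alpha_j=\|\vm_j-\vn\|_{1,2}\to 0$, and whether $\beta_j^p/\alpha_j^2$ stays bounded. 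Your outline should be amended to introduce $\tilde K$ and to carry out (or at least acknowledge) this manual reduction rather than asserting that (L3) transfers.
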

 
 \begin{proof}
 
 Unfortunately in this proof we cannot simply use the functional $K$ to apply \cite[Theorem 5.1]{grabovsky2009sufficient} and then relate the condition back to $I$. This is because $K$ does not satisfy the required coercivity conditions unless $p=2$. Therefore we introduce and study a related functional $\tilde{K}$. If $p=2$ we let $\tilde{K}=K$, otherwise we define
 
 \begin{equation}\label{6.15}
 \begin{split}
  \tilde{K}(\vm) &:=\int_\Omega \tilde{H}(x,\vm,\nabla\vm)\,dx \\&=K(\vm)+\int_\Omega \psi(\vm)\left( \left| \vm-P(\vm) \right|^p+\left| \nabla\vm-\nabla\left[P(\vm)\right] \right|^p \right)\,dx.
 \end{split}
 \end{equation}
 In other words this functional has been given a form of stability with respect to the $W^{1,p}$ norm as well as the $W^{1,2}$ norm. The first thing to note about this functional $\tilde{K}$ is that
 \begin{equation}\label{6.16}
  \delta \tilde{K}(\vn)(\vv)=\delta K(\vn)(\vv)\quad \text{and} \quad \delta^2 \tilde{K}(\vn)(\vv,\vv)=\delta^2 K(\vn)(\vv,\vv) \quad \forall \,\,\vv\in \text{Var}_{\mathcal{A}}.
 \end{equation}
 Therefore when we take an arbitrary $\vv \in \text{Var}_{\mathcal{A}}$, using the same logic as from the proof of the weak sufficiency theorem, we deduce
 \begin{equation}\label{6.17}
  \delta \tilde{K}(\vn)(\vv)=0 \quad \text{and} \quad \delta^2 \tilde{K}(\vn)(\vv,\vv)\geqslant \frac{2\gamma}{2+\gamma} ||\vv||_{1,2}^2.
 \end{equation}

 We also need to show that the strengthened quasiconvexity conditions used in \cite[Theorem 5.1]{grabovsky2009sufficient} hold for $\tilde{K}$, given that $F$ satisfies \eqref{6.13} and \eqref{6.14}. This is achieved with a careful calculation, if $p>2$ then
 \begin{equation}\label{6.18}
 \begin{split}
 &\mathcal{E}_{\tilde{H}}\left( x,\vn,\nabla\vn,\nabla\vn+\nabla\vphi(y) \right)  \\
 =\quad & \tilde{H}(x,\vn,\nabla \vn+\nabla \vphi(y))-\tilde{H}(x,\vn,\nabla \vn)- \nabla\vphi(y):\tilde{H}_{\nabla\vn}(x,\vn,\nabla\vn) \\
 =\quad & F\left(x,\vn,\nabla\vn+\nabla P(\vn)\nabla\vphi(y)\right)+\left|\nabla \vphi(y)-\nabla P(\vn)\nabla \vphi(y)\right|^2-F(x,\vn,\nabla\vn) \\
 &+\left|\nabla \vphi(y)-\nabla P(\vn)\nabla \vphi(y)\right|^p-\nabla P(\vn)\nabla\vphi(y):F_{\nabla\vn}\left(x,\vn,\nabla\vn\right) \\
 \geqslant\quad& \mathcal{E}_F \left(x,\vn,\nabla\vn,\nabla\vn+\nabla P(\vn)\nabla\vphi(y) \right)+\left|\nabla \vphi(y)-\nabla P(\vn)\nabla \vphi(y)\right|^2,
 \end{split}
 \end{equation}
 and the $p=2$ case is the same but without the term with the $p^\text{th}$ power. Now we can apply our strengthened quasiconvexity condition to \eqref{6.18} to find
 \begin{equation}\label{6.19}
 \begin{split}
   &\int_D \mathcal{E}_{\tilde{H}}\left(x,\vn(x),\nabla\vn(x),\nabla\vn(x)+\nabla\vphi(y) \right)\,dy \\
   \geqslant& \int_D \gamma  |\nabla P(\vn)\nabla\vphi(y)|^2+\left|\left(Id-\nabla P(\vn)\right) \nabla\vphi(y) \right|^2\,dy \\ 
    =& \int_D \gamma |\nabla P(\vn) \nabla \vphi(y)|^2+|\nabla\vphi(y)|^2dy \\ 
    & -\int_D 2\nabla P(\vn)\nabla\vphi(y):\nabla\vphi(y)+\left| \nabla P(\vn)\nabla\vphi(y)\right|^2 \,dy \\
    = & \int_D \frac{\gamma}{1+\gamma} |\nabla\vphi(y)|^2 +\frac{1}{1+\gamma}\left| \nabla\vphi(y)-(1+\gamma)\nabla P(\vn)\nabla\vphi(y) \right|^2\,dy \\
    \geqslant& \frac{\gamma}{1+\gamma}\int_D |\nabla \vphi(y)|^2\,dy
 \end{split}
 \end{equation}
 for any open, bounded $D$ and $\vphi \in W^{1,\infty}_0\left(D,\mathbb{R}^k\right)$. The same is also true of the quasiconvexity at the boundary. Therefore the final thing to consider before we can apply \cite[Theorem 5.1]{grabovsky2009sufficient} are the conditions which $\tilde{H}$ must satisfy as the counterparts to (L1)-(L4). From the definition of $K$, the regularity assumption (L1) implies that
 \begin{equation}\label{6.20}
 K\in C^2\left(U\times \mathbb{R}^k \times \mathbb{R}^{k\times d}\right),
 \end{equation}
 and the additional terms for $\tilde{K}$ are clearly twice differentiable so that $\tilde{K}$ also satisfies \eqref{6.20}. Equally it is clear to see that the growth condition (L2) must also hold for $\tilde{K}$. For the uniform continuity condition we can show that $\tilde{H}$ satisfies a slightly different condition to (L4). 
 
 \vspace{3mm}
 \noindent
 (L4*) For every $R>0$ and $\epsilon>0$ there exists a $\delta>0$ such that for every ${\bf Q}\in \mathbb{R}^{k\times d}$, $|\vn|<R$, and $x_1,x_2\in \overline{\Omega}$ with $|x_1-x_2|\leqslant \delta$  
\begin{equation}\label{6.21}
 \frac{|\tilde{H}(x_1,\vn,{\bf Q})-\tilde{H}(x_2,\vn,{\bf Q})|}{1+|{\bf Q}|^p} < \epsilon.
\end{equation}

In the language of Grabovsky \& Mengesha's paper this means that $\tilde{H}$ satisfies conditions (H1), (H2) and (H4) so that we only need to deal with the coercivity condition in order to apply their result. They use the coercivity condition to show that the problem of strong local minimizers can effectively be reduced to $W^{1,p}$ local minimizers. We will prove this directly with our (L3) assumption mimicking the reasoning in \cite[Section 7]{grabovsky2009sufficient}. On a technical note we choose $\epsilon>0$ to be a fixed constant, sufficiently small, such that if $\vm \in\mathcal{B}$ and $||\vm-\vn||_\infty<\epsilon$ then 
\begin{equation}\label{6.22}
 \max\left\{ \epsilon,\left|\left| \vn-P(\vm) \right|\right|_\infty \right\} \leqslant \frac{\delta^*}{2}.
\end{equation}
When we rewrite \eqref{6.9} and \eqref{6.10} as conditions on $\tilde{K}$ we see that if we take an $\vm \in \mathcal{B}$ with $||\vm-\vn||_{\infty}<\epsilon$ 
 \begin{equation}\label{6.23}
  \tilde{K}(\vm)\geqslant C_1 \left|\left| P(\vm) \right|\right|_{1,p}^p-C_2+\left|\left|\vm-P(\vm)\right|\right|_{1,2}^2+\left|\left|\vm-P(\vm)\right|\right|_{1,p}^p
 \end{equation}
 and if $p=2$ we do not have the final term in the equation above. As for the additional assumption \eqref{6.10}, if we take $\vm\in \mathcal{B}$ with $||\vn-\vm||_\infty <\epsilon$, then
 \begin{equation}\label{6.24}
 \begin{split}
 \tilde{K}(\vm)-\tilde{K}(\vn)  \geqslant & D_1 \left|\left|\vn-P(\vm)\right|\right|_{1,p}^p-D_2\left|\left|\vn-P(\vm)\right|\right|_{1,2}^2\\
 &+\left|\left|\vm-P(\vm)\right|\right|_{1,2}^2 + \left|\left|\vm-P(\vm)\right|\right|_{1,p}^p.
 \end{split}
 \end{equation}
 Here we need to introduce a little notation from \cite{grabovsky2009sufficient} to follow their reasoning.
 \begin{definition}\label{def6}
  We say that a sequence $\left( \vm_j\right) \subset {\mathcal{B}}$ is a strong variation if $||\vm_j-\vn||_{\infty} \rightarrow 0$ as $j\rightarrow \infty$.
 \end{definition}
 
 To avoid any technical difficulties we will always assume that $||\vm_j-\vn||_{\infty}<\epsilon$ for all $j$ for any strong variation. With this definition Grabovsky \& Mengesha look to prove that the normalized increment
 \begin{equation}\label{6.25}
  \liminf_{j\rightarrow \infty}\frac{\tilde{K}(\vm_j)-\tilde{K}(\vn)}{||\vm_j-\vn||_{1,2}^2}\geqslant 0
 \end{equation}
 for any strong variation $\left(\vm_j\right)$. The estimate \eqref{6.23} means that if we take a strong variation $\left( \vm_j \right)$ such that $||\nabla \vm_j||_{p}\rightarrow \infty$, we automatically have
 \begin{equation}\label{6.26}
  \tilde{K}(\vm_j) \geqslant \tilde{K}(\vn) \quad \forall j\geqslant N,
 \end{equation}
 for some $N>0$, which means that the normalized increment condition \eqref{6.25} is satisfied. Therefore we only need to consider strong variations $\left( \vm_j\right)$ such that $\vm_j \rightharpoonup \vn$ in $W^{1,p}$. Let
 \begin{equation}\label{6.27}
  \alpha_j =||\vm_j-\vn||_{1,2} \leqslant D ||\vm_j-\vn||_{1,p}=\beta_j,
 \end{equation}
 and we will show that \eqref{6.25} holds unless
 \begin{equation}\label{6.28}
  \lim_{j \rightarrow \infty} \alpha_j=\lim_{j \rightarrow \infty} \beta_j =0 \quad \text{and} \quad \lim_{j \rightarrow \infty} \frac{ \beta_j^p}{\alpha_j^2} <\infty.
 \end{equation}

 Since $\vm \rightharpoonup \vn$ in $W^{1,p}$ we know that $\left(\alpha_j\right)$ and $\left(\beta_j\right)$ are bounded sequences and without loss of generality we assume that $\alpha_j \rightarrow \alpha_0>0$. Let $Q\tilde{H}(x,\vn,{\bf Q})$ be the quasiconvexification of $\tilde{H}(x,\vn,{\bf Q})$ with respect to ${\bf Q}$, and we exploit the fact that the quasiconvexified functional is $W^{1,p}$ sequentially weakly lower semicontinuous \cite[Theorem 8.11]{dacorogna2008direct} with \eqref{6.13} to obtain
 \begin{equation}\label{6.29}
 \begin{split}
  \liminf_{j \rightarrow \infty} \frac{ \tilde{K}(\vm_j)-\tilde{K}(\vn)}{\alpha_j^2} &= \frac{1}{\alpha_0^2} \liminf_{j \rightarrow \infty} \int_\Omega \tilde{H}(x,\vm_j,\nabla\vm_j)-\tilde{H}(x,\vn,\nabla\vn)\,dx \\ & \geqslant
  \frac{1}{\alpha_0^2}\liminf_{j \rightarrow \infty}\int_\Omega Q\tilde{H}(x,\vm_j,\nabla\vm_j)-\tilde{H}(x,\vn,\nabla\vn)\,dx \\ & \geqslant
  \frac{1}{\alpha_0^2}\int_\Omega Q\tilde{H}(x,\vn,\nabla\vn)-\tilde{H}(x,\vn,\nabla\vn)\,dx \\ &=0.
 \end{split}
 \end{equation}
 Therefore we need only look at the cases where $\alpha_j \rightarrow 0$ which is all we require if $p=2$. A corollary of the fact that $\alpha_j \rightarrow 0$ for a strong variation $\left(\vm_j \right)$ is that 
 \begin{equation}\label{6.30}
  P\left(\vm_j\right)\rightarrow \vn \quad \text{in} \quad W^{1,2}\left(\Omega,\mathbb{R}^k \right).
 \end{equation}
 If $p>2$, this corollary, together with \eqref{6.24}, shows us that if $\alpha_j \rightarrow 0$, the normalized increment from \eqref{6.25} is automatically non-negative unless
 \begin{equation}\label{6.31}
  \vn-P\left( \vm_j\right)\rightarrow 0 \quad \text{and} \quad \vm_j-P\left( \vm_j\right)\rightarrow 0 \quad \text{in} \quad W^{1,p} \,\, \Rightarrow \,\, \vm_j\rightarrow \vn \quad \text{in} \quad W^{1,p} 
 \end{equation}
 This means that we have reduced the problem to $W^{1,p}$ local minimizers for $\tilde{K}$, exactly as in \cite[Section 7]{grabovsky2009sufficient}. Now we have amassed all of these conditions and dealt with the (L3) assumption, we are in a position to apply \cite[Theorem 5.2]{grabovsky2009sufficient} and its direct corollary \cite[Theorem 5.1]{grabovsky2009sufficient}. This tells us that $\vn$ is a strong local minimizer of $\tilde{K}$ which implies that there exists some $ \delta >0$ such that if $\vm\in\mathcal{A}\subset \mathcal{B}$ such that $||\vm-\vn||_\infty <\delta$ then
\begin{equation}\label{6.32}
 I(\vm)=\tilde{K}(\vm)\geqslant \tilde{K}(\vn)=I(\vn).
\end{equation}
Thus $\vn$ is a strong local minimizer of $I$.
\qed
\end{proof}

%
%
%
%
%
%
%
%

\section{Illustrative applications}\label{sec:applications}

In this final section we will apply the results we have established to rigorously investigate stability for some constrained problems. The motivation for this work came from liquid crystals so it is logical to begin there. We recall from the introduction that the standard one-constant, cholesteric Oseen-Frank theory problem is to minimize
\begin{equation}\label{7.1}
 I(\vn)=K\int_\Omega |\nabla \vn|^2+2t\, \vn \nabla\times \vn +t^2\, dx
\end{equation}
where $K,t\geqslant0$ are constants, $\Omega=[-L_1,L_1]\times[-L_2,L_2]\times[0,1]$, and the set of admissible mappings is 
\begin{equation}\label{7.2}
 \mathcal{A}:=\left.\left\{ \, \vn\in W^{1,2}\left( \Omega,\mathbb{S}^2\right)\,\right|\, \vn|_{z=0}=\vn|_{z=1}={\bf e}_3, \,\, \vn|_{x=-L_1} = \vn|_{x=L_1},\,\, \vn|_{y=-L_2}=\vn|_{y=L_2}\,\right\}.
\end{equation}
This situation clearly falls within the scope of Sections \ref{sec:weak necessary}-\ref{sec:strong sufficient}. The boundary conditions we impose are frustrated because it discounts the natural structure that a cholesteric liquid crystal prefers to form. The basic cholesteric helical configuration is given by
\begin{equation}\label{7.3}
 \vn(z)=\left( \begin{array}{c} \cos(tz) \\ \sin(tz) \\ 0 \end{array} \right).
\end{equation}
Due to this frustration, we will consider the stability of the constant function $\vn={\bf e}_3$, also called the unwound state, as the parameter $t$ changes.
This question has been investigated with experiments \cite{gartland2010electric} and simulations before but does not seem to have been treated within an analytical mathematical framework. It is a simple exercise to show that the strong form of the Euler-Lagrange equation for \eqref{7.1} is
\begin{equation}\label{7.4}
 \Delta \vn - 2t\nabla \times \vn + \vn\left( |\nabla \vn|^2+2t\,\vn\nabla\times \vn\right)=0.
\end{equation}
Clearly $\vn={\bf e}_3$ is always a solution of this equation and the following proposition quantifies its stability.

\begin{proposition}\label{LCprop}
Consider the variational problem as described in \eqref{7.1} and \eqref{7.2}. If $t<\pi$ then $\vn={\bf e}_3$ is a strict strong local minimizer of $I$. If $t>\pi$ then $\vn={\bf e}_3$ is not a weak local minimizer of $I$.
\end{proposition}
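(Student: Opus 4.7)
My plan is to apply Theorem \ref{Strong sufficient Taheri} in the regime $t < \pi$ and to directly exhibit a destabilising variation in the regime $t > \pi$. First, the constant state $\vn = {\bf e}_3$ is a trivial classical solution of the Euler--Lagrange equation \eqref{7.4}, so the first variation $g'(0)$ vanishes for every $\vv \in \text{Var}_\mathcal{A}$. Using the sphere projection formulae \eqref{1.11}, the tangential component of the variation is $\vw'_0 = (\vv_1, \vv_2, 0) =: \vphi$, and a direct calculation (in which the $\vw''_0$ contributions drop out after integration by parts, exploiting the periodic lateral boundary conditions and the vanishing of $\vv$ at $z = 0, 1$) yields
\begin{equation*}
g''(0) = 2K \int_\Omega |\nabla\vphi|^2 + 2t(\vphi_2 \partial_z \vphi_1 - \vphi_1 \partial_z \vphi_2) \, dx.
\end{equation*}

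The decisive step is a gauge transformation that diagonalises this quadratic form. Setting $u = \vphi_1 + i\vphi_2$ and $v(x,y,z) := e^{-itz} u(x,y,z)$, one checks $|v|^2 = |\vphi|^2$ together with the pointwise identity $|\partial_z v|^2 - t^2 |v|^2 = |\partial_z \vphi|^2 + 2t(\vphi_2 \partial_z \vphi_1 - \vphi_1 \partial_z \vphi_2)$, which recasts the second variation as
\begin{equation*}
g''(0) = 2K \int_\Omega \left( |\partial_x \vphi|^2 + |\partial_y \vphi|^2 + |\partial_z v|^2 - t^2 |v|^2 \right) dx.
\end{equation*}
Since $\vv$ vanishes on the top and bottom faces of $\Omega$, so does $v$, and the one-dimensional Poincar\'e inequality gives $\int_0^1 |\partial_z v|^2 \, dz \geq \pi^2 \int_0^1 |v|^2 \, dz$ for each fixed $(x, y)$; this is precisely where the critical value $\pi$ emerges.

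For $t < \pi$ I would combine this Poincar\'e bound with a standard Cauchy--Schwarz split to obtain the coercivity estimate $g''(0) \geq \gamma ||\vw'_0||_{1,2}^2$ for some $\gamma > 0$. Since the Lagrangian is quadratic in $\nabla \vn$ with only a linear cross-term, its Weierstrass excess reduces to $\mathcal{E}_F(x, \vm, \vQ_1, \vQ_2) = |\vQ_2 - \vQ_1|^2$, so hypothesis \eqref{6.3} of Theorem \ref{Strong sufficient Taheri} holds and we conclude that ${\bf e}_3$ is a strict strong local minimiser. For $t > \pi$ I would pick the explicit destabilising direction $v(z) = \sin(\pi z)$, corresponding to $\vv = (\cos(tz) \sin(\pi z), \sin(tz) \sin(\pi z), 0) \in \text{Var}_\mathcal{A}$; a short computation then yields $g''(0) = 4 K L_1 L_2 (\pi^2 - t^2) < 0$, and since $g(s) = g(0) + \tfrac{1}{2} s^2 g''(0) + o(s^2)$ with $||\vw_s - {\bf e}_3||_{1,\infty} \to 0$ as $s \to 0$, the state ${\bf e}_3$ admits smooth perturbations of arbitrarily small $W^{1,\infty}$ norm and strictly lower energy, so it fails to be a weak local minimiser. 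The principal obstacle is identifying the gauge transformation $v = e^{-itz} u$; without it the cross-term in $g''(0)$ is awkward to handle and the critical value $t = \pi$ appears mysterious rather than natural.
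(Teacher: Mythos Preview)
Your proof is correct and takes a genuinely different route from the paper. The paper parametrises perturbations via Euler angles, reducing to the one-dimensional functional $J(\theta)=\int_0^1 \theta'^2 - t^2\cos^2\theta\,dz$, establishes positivity of its second variation via Poincar\'e, and then upgrades positivity of the full second variation to coercivity through a weak-compactness contradiction argument; for instability it constructs an explicit sequence $\vm_j$ using the angle representation. Your gauge transformation $v=e^{\pm itz}(\vphi_1+i\vphi_2)$ is more direct: it diagonalises the quadratic form in one stroke, so that the second variation becomes $2K\int_\Omega |\nabla v|^2 - t^2|v|^2\,dx$ and both the coercivity for $t<\pi$ and the destabilising direction $v=\sin(\pi z)$ for $t>\pi$ fall out immediately from the sharp one-dimensional Poincar\'e inequality. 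Your passage from $\|v\|_{1,2}$-control to $\|\vphi\|_{1,2}$-control is elementary (triangle inequality on $\partial_z u = e^{\mp itz}(\partial_z v \mp itv)$), so you avoid the paper's compactness step entirely.

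Two small corrections that do not affect the argument: first, with the sign of the cross term as written in \eqref{7.7} the correct gauge is $v=e^{+itz}u$ rather than $e^{-itz}u$ (check the imaginary part of $\bar u\,\partial_z u$); second, the $\vw''_0$ terms in the second variation vanish simply because $\nabla{\bf e}_3=0$ and $\nabla\times{\bf e}_3=0$, not via integration by parts. Also the excess function is $K|\vQ_2-\vQ_1|^2$, the constant $K$ being harmless for \eqref{6.3}.
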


\begin{proof}

The heart of the proof centers around showing that the second variation of $I$ can be negative if $t>\pi$ and proving its positivity when $t<\pi$. Therefore when $t<\pi$ we are looking to show that for some $\delta>0$
 \begin{equation}\label{7.5}
  \left.\frac{d^2}{d\epsilon^2}I\left( \frac{\vn+\epsilon \vv}{|\vn+\epsilon \vv|} \right) \right|_{\epsilon=0} = \left.\frac{d^2}{d\epsilon^2}I\left(\vw_\epsilon\right)\right|_{\epsilon=0} \geqslant \delta ||\vw'(0)||_{1,2}^2
 \end{equation}
 for every $\vv \in \text{Var}_\mathcal{A}$ where 
 \begin{equation}\label{7.6}
 \text{Var}_\mathcal{A}:=\left\{ \, \vv\in C^\infty\left( {\Omega},\mathbb{R}^3\right)\,|\, \vv|_{z=0}=\vv|_{z=1}=0,\,\, \vv|_{x=-L_1}=\vv|_{x=L_1},\,\, \vv|_{y=-L_2} = \vv|_{y=L_2}\,\right\}.
\end{equation}
By referring to Theorem \ref{Strong sufficient Taheri}, we see that \eqref{7.5} is sufficient to show that the state is a strict strong local minimizer as the strengthened Weierstrass condition \eqref{6.3} is automatically satisfied. To accomplish this we need to find the exact form of the second variation. Let $\vv \in \text{Var}_\mathcal{A}$ and $\vw_\epsilon:=\frac{\vn+\epsilon \vv}{|\vn+\epsilon \vv|}$, then

\begin{equation}\label{7.7}
\begin{split}
 \left.\frac{d^2}{d\epsilon^2}I\left( \vw_\epsilon \right)\right|_{\epsilon=0} =& 2\int_\Omega |\nabla \vw'_0|^2+2t\vw'_0\cdot\nabla\times \vw'_0\,dx \\&+2\int_\Omega\nabla \vw''_0:\nabla \vw_0+2t\vw''_0\cdot \nabla \times \vw_0\,dx \\  
 =& 2\int_\Omega |\nabla \vw'_0|^2+2t\vw'_0\cdot\nabla\times \vw'_0\,dx \\
 =& 2\int_\Omega |\nabla \vv_1|^2+|\nabla\vv_2|^2+2t\left(\vv_2\vv_{1,3}-\vv_1 \vv_{2,3}\right)\, dx.
 \end{split}
\end{equation}

Note that only the derivatives in $z$ can possibly make \eqref{7.7} negative since the other derivatives only appear as perfect squares. Therefore we proceed initially by proving that ${\bf e}_3$ is a strict weak local minimizer of $I$ over functions which just depend on $z$. Let
 \begin{equation}\label{7.8}
   \vn(z) = \left( \begin{array}{c} \cos\theta\cos\phi \\\cos\theta\sin\phi \\ \sin \theta \end{array} \right)\,\,\text{where}\,\, \phi=\phi(z),\,\,\text{and}\,\, \theta=\theta(z),
 \end{equation}
 then we can perform the following energy estimate
 \begin{equation}\label{7.9}
 \begin{split}
  I(\vn)-I({\bf e}_3) &=4L_1L_2\int_0^1 \theta'^2 +2t\cos^2\theta\left( \phi'^2-\phi'\right)\,dx \\&\geqslant 4L_1L_2 \int_0^1 \theta'^2-t^2 \cos^2\theta\,dz.
 \end{split}
 \end{equation}
 We let 
 \begin{equation}\label{7.10}
  J(\theta) := \int_0^1 \theta'^2-t^2 \cos^2\theta\,dz,
 \end{equation}
 and we will show that $\theta=\frac{\pi}{2}$ is a strict weak local minimizer of $J$ over 
 \begin{equation}\label{7.11}
  \mathcal{C}:=\left\{ \left.\, f\in W^{1,2}\left(0,1\right)\,\right|\,f(0)=f(1)=\frac{\pi}{2}\,\right\}
 \end{equation}
 when $t<\pi$ and not a weak local minimizer when $t>\pi$. We prove the stability first. The constant $\frac{\pi}{2}$ clearly satisfies the Euler Lagrange equation for $J$ and when we calculate the second variation we find 
 \begin{equation}\label{7.12}
  \left.\frac{d^2}{d\epsilon^2} J\left(\frac{\pi}{2}+\epsilon g\right)\right|_{\epsilon=0} =2 \int_0^1 g'^2-t^2 g^2\,dz.
 \end{equation}
 However we know that our test space is $W^{1,2}_0(0,1)$ and the Poincar\'{e} constant for those functions is precisely $\pi^2$ \cite{dym1972fourier}. Therefore if $t<\pi$
 \begin{equation}\label{7.13}
  \left.\frac{d^2}{d\epsilon^2} J\left(\frac{\pi}{2}+\epsilon g\right)\right|_{\epsilon=0}\geqslant \frac{\left(\pi^2-t^2\right)}{\pi^2} \int_0^1 g'^2\,dz \geqslant \alpha ||g||_{1,2}^2.
 \end{equation}
 
 The standard 1-dimensional theory therefore implies that $\theta=\frac{\pi}{2}$ is a strict weak local minimiser of $J$. Now we take some $\vv = \left( \vv_1, \vv_2 , 0 \right)\in \text{Var}_\mathcal{A}$ such that $\vv=\vv(z)$ and $||\vv||_{1,2}>0$. A small calculation, using a Taylor series expansion, shows that the Euler angle $\theta_\epsilon$ associated with $\vw_\epsilon$ is given by
 \begin{equation}\label{7.14}
  \theta_\epsilon = \sin^{-1} \left( \frac{1}{\left( 1+\epsilon^2 (\vv_1^2+\vv_2^2) \right)^\frac{1}{2}} \right) = \frac{\pi}{2} - (\vv_1^2+\vv_2^2)^\frac{1}{2}\epsilon + \epsilon^3 f_\epsilon,
 \end{equation}
 where $||f_\epsilon ||_{1,\infty} \leqslant C$ uniformly around $\epsilon = 0$. This immediately implies that $\theta_\epsilon \rightarrow \frac{\pi}{2}$ in $W^{1,\infty}$. Thus for all $\epsilon$ sufficiently small, $I(\vw_\epsilon) - I({\bf e}_3) \geqslant J(\theta_\epsilon) > 0$. As a result of these inequalities we know that
 \begin{equation}\label{7.15}
 \left. \frac{d^2}{d\epsilon^2}I(\vw_\epsilon)\right|_{\epsilon=0} \geqslant \left. \frac{d^2}{d\epsilon^2}J(\theta_\epsilon)\right|_{\epsilon=0}.
 \end{equation}
 When we combine \eqref{7.15} with the alternative formulation of $\theta_\epsilon$ we deduce
 \begin{equation}\label{7.16}
 \left. \frac{d^2}{d\epsilon^2}J(\theta_\epsilon)\right|_{\epsilon=0} = \left. \frac{d^2}{d\epsilon^2}J\left( \frac{\pi}{2} + \epsilon g + \epsilon^3 f_\epsilon \right)\right|_{\epsilon=0}= \left. \frac{d^2}{d\epsilon^2}J\left( \frac{\pi}{2} + \epsilon g  \right)\right|_{\epsilon=0}>0.
\end{equation}
Therefore we have proved that for any $\vv(z) \in \text{Var}_\mathcal{A}$, the second variation is strictly positive so long as $\vw'_0$ is non-zero. Returning to the form of the second variation \eqref{7.7} we see that the perfect squares in the $x$ and $y$ variables imply that if we now take an arbitrary $\vv\in \text{Var}_\mathcal{A}$ such that $||\vw'_0||_{1,2}>0$, then  
 \begin{equation}\label{7.18}
  \left. \frac{d^2}{d\epsilon^2}I(\vw_\epsilon)\right|_{\epsilon=0}>0.
 \end{equation}

To conclude stability we just need to show that \eqref{7.18} is in fact bounded below by $\delta||\vw'(0)||_{1,2}^2$. We argue by contradiction. If this was not the case then we could find a sequence of functions $\left( \vv^j\right)$ such that
 \begin{equation}\label{7.19}
||\vw'_{0 j}||_{1,2}^2=\left|\left|\,\left( \begin{array}{c} \vv^j_1 \\ \vv^j_2 \\ 0\end{array} \right)\,\right|\right|_{1,2}^2=1,
 \end{equation}
 and the second variation about these functions tended to zero. Without loss of generality we can assume that $\vv^j_1\rightharpoonup \vv_1$ and $\vv^j_2\rightharpoonup \vv_2$ in $W^{1,2}\left(\Omega\right) \subset\subset L^2\left(\Omega\right)$ so that when we take the $\liminf$ of the second variation we find
 \begin{equation}\label{7.20}
 \begin{split}
  0 &= \liminf_{j\rightarrow \infty} \int_\Omega |\nabla \vv_1^j|^2+|\nabla \vv_2^j|^2+2t\left( -\vv^j_1\vv^j_{2,3}+\vv^j_{1,3}\vv^j_2\right)\,dx \\
  &\geqslant \int_\Omega |\nabla \vv_1|^2+|\nabla \vv_2|^2+2t\left( -\vv_1\vv_{2,3}+\vv_{1,3}\vv_2\right)\,dx \\
  &\geqslant 0.
 \end{split}
 \end{equation}
Hence $\vv_1=\vv_2=0$ and so to have the second variation tend to zero we require that 
\begin{equation}\label{7.21}
 \int_\Omega |\nabla\vv^j_1|^2+|\nabla\vv^j_2|^2\,dx \rightarrow 0,
\end{equation}
and this clearly contradicts \eqref{7.19}. Therefore we find
 \begin{equation}\label{7.22}
  \left.\frac{d^2}{d\epsilon^2}I\left( \vw(\epsilon) \right)\right|_{\epsilon=0}>\delta ||\vw'(0)||_{1,2}^2
 \end{equation}
 for some $\delta>0$. So Theorem \ref{Strong sufficient Taheri} implies ${\bf e}_3$ is a strict strong local minimizer of $I$ if $t<\pi$. For the instability we return to the second variation of the functional $J$ as given in \eqref{7.12}. By setting $g(z)=\sin(\pi z)$ we see that the second variation is negative if $t>\pi$. Therefore there exists a sequence $\left( \theta_j\right) \subset \mathcal{C}$, such that $\theta_j \rightarrow \frac{\pi}{2}$ in $W^{1,\infty}$ and $J(\theta_j)<0$ for all $j$. Define 
 \begin{equation}\label{7.23}
  \vm_j(z) :=\left( \begin{array}{c} \cos \theta_j \cos(tz) \\ \cos \theta_j \sin(tz) \\ \sin \theta_j \end{array} \right)
 \end{equation}
 and we will show an upper bound for $||\vm_j-{\bf e}_3||_{1,\infty}$. The mean value theorem allows us to say that
 \begin{equation}\label{7.24}
 \begin{split}
  |\vm_j -{\bf e}_3|^2 & = 2 ( 1-\sin \theta_j ) \\
  & \leqslant C_1\left|\theta_j -\frac{\pi}{2}\right|.
  \end{split}
 \end{equation}
 Similarly 
 \begin{equation}\label{7.25}
 |\nabla\vm_j|^2=\theta_j'^2+ t^2\cos^2 \theta_j \leqslant \theta_j'^2+C_1t^2\left|\theta_j-\frac{\pi}{2}\right|,
 \end{equation}
 therefore $||\vm_j - {\bf e}_3||_{1,\infty} \rightarrow 0$ and 
 \begin{equation}\label{7.26}
  I(\vm_j)-I({\bf e}_3) = J(\theta_j) <0.
 \end{equation}
 This proves that ${\bf e}_3$ cannot be a weak local minimizer when $t>\pi$.

 \qed
\end{proof}

\begin{remark}
 It is not clear whether the constant state ${\bf e}_3$ is a local minimizer or not at the critical value of $t=\pi$. Although the problem is smooth in the twist parameter $t$, unless we know that ${\bf e}_3$ is a global minimizer for $t<\pi$ we do not know of a way to infer its stability at the critical value.
\end{remark}

The constrained results presented in sections \ref{sec:weak necessary}-\ref{sec:strong sufficient} roughly mimic those of the classical problems. However not all ideas from the classical study of Calculus of Variations translate to their constrained counterparts. In the next example we will use the circle $\mathbb{S}^1$ as our target space. Then we show that having a Lagrangian which is convex in the gradient does not guarantee a weak local minimizer is actually a global minimizer. In fact we go further and show that the problem has countably many strong local minimizers which are not global minima. Results and ideas of this kind are already known but have not been approached from this angle to the best of our knowledge. Consider the simplest one dimensional functional
 \begin{equation}\label{7.27}
  I(\vn)=\int_0^1 |\vn_x|^2 \,dx,
 \end{equation}
 over the set of admissible functions 
 \begin{equation}\label{7.28}
  \mathcal{A}:=\left\{ \left.\, \vn \in W^{1,2}\left( (0,1),\mathbb{S}^1 \right) \,\right|\,\vn(0)=\vn(1)={\bf e}_1 \, \right\}.
 \end{equation}
 
 It is a simple exercise (see \cite[p. 496]{evans2010partial}) to show that the weak form of the Euler-Lagrange equation for this one variable problem is 
 \begin{equation}\label{7.29}
 \int_0^1  \vv_x \cdot \vn_x-\left( \vn \cdot \vv\right) |\vn_x|^2 \,dx=0,
 \end{equation}
 for all $\vv \in C^\infty_0\left( (0,1),\mathbb{R}^2\right)$. It is also easy enough to realize that we have an infinite number of solutions to this differential equation by noticing that 
 \begin{equation}\label{7.30}
  \vn_k(x) :=\left( \begin{array}{c} \cos(2k\pi x) \\ \sin(2k\pi x) \end{array} \right) 
 \end{equation}
 satisfies \eqref{7.29} for every $k\in \mathbb{Z}$. In order to show that these are strong local minimizers, we just need to show that we satisfy all of the conditions which are required in Theorem \ref{Strong sufficient Taheri}.
 
 \begin{proposition} \label{Prop6.1}
  
  For each $k \in \mathbb{Z}$ the function $\vn_k$ is a strict strong local minimizer.
  
 \end{proposition}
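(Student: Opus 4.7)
The plan is to verify the hypotheses of Theorem \ref{Strong sufficient Taheri} for each $\vn_k$, which is smooth and hence lies in $\mathcal{A}\cap C^1(\overline{\Omega},\mathbb{S}^1)$. Since the Lagrangian $F(x,\vn,\vQ)=|\vQ|^2$ is quadratic in $\vQ$, a one-line computation gives
\begin{equation}
\mathcal{E}_F(x,\vm,\vQ_1,\vQ_2) = |\vQ_2|^2 - |\vQ_1|^2 - 2\vQ_1\cdot(\vQ_2-\vQ_1) = |\vQ_1-\vQ_2|^2,
\end{equation}
so the strengthened Weierstrass condition \eqref{6.3} is satisfied with $\gamma=1$. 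It therefore suffices to show $g'(0)=0$ and $g''(0)\geqslant \gamma \|\vw'_0\|_{1,2}^2$ for some $\gamma>0$ independent of $\vv \in \text{Var}_{\mathcal{A}}$.

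To set up the calculation I would introduce the unit tangent field $\vn_k^\perp(x):=(-\sin(2k\pi x),\cos(2k\pi x))$ and, for a given $\vv\in\text{Var}_\mathcal{A}$, set $\phi(x):=\vv(x)\cdot\vn_k^\perp(x)$. Note $\phi(0)=\phi(1)=0$ because $\vv$ vanishes at the endpoints. Using the explicit $\mathbb{S}^{k-1}$ formulas \eqref{1.11} and $(\vn_k)_x=2k\pi\vn_k^\perp$, one finds $\vw'_0=\phi\,\vn_k^\perp$ and hence
\begin{equation}
|\vw'_0|^2=\phi^2, \qquad |(\vw'_0)_x|^2=\phi_x^2+4k^2\pi^2\phi^2.
\end{equation}
The vanishing of the first variation follows either from \eqref{7.29} directly or from $g'(0)=2\int_0^1 (\vn_k)_x\cdot(\vw'_0)_x\,dx = 4k\pi\int_0^1\phi_x\,dx = 0$.

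For the second variation, the main task is to massage
\begin{equation}
g''(0) = 2\int_0^1 |(\vw'_0)_x|^2 + (\vn_k)_x\cdot(\vw''_0)_x\,dx
\end{equation}
into a tractable form. I would integrate the second term by parts (the boundary contribution vanishes since $\vv=0$ at $0,1$ forces $\vw''_0=0$ there by \eqref{1.11}), use $(\vn_k)_{xx}=-4k^2\pi^2\vn_k$, and then the identity $\vn_k\cdot\vw''_0=(\vn_k\cdot\vv)^2-|\vv|^2=-\phi^2$ from \eqref{1.11}. The $4k^2\pi^2\phi^2$ terms cancel and one is left with the clean expression
\begin{equation}
g''(0) = 2\int_0^1 \phi_x^2\,dx.
\end{equation}

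With this in hand the key estimate is just the one-dimensional Poincaré inequality $\int_0^1\phi^2\leqslant \pi^{-2}\int_0^1\phi_x^2$ valid for $\phi\in W^{1,2}_0(0,1)$, which gives
\begin{equation}
g''(0) = 2\int_0^1\phi_x^2\,dx \geqslant \gamma_k\int_0^1\bigl[(1+4k^2\pi^2)\phi^2+\phi_x^2\bigr]\,dx = \gamma_k\|\vw'_0\|_{1,2}^2
\end{equation}
for $\gamma_k:=2\pi^2/(\pi^2+1+4k^2\pi^2)>0$. All hypotheses of Theorem \ref{Strong sufficient Taheri} are then met and the conclusion follows. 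The main (modest) obstacle is recognizing the cancellation in $g''(0)$: the apparent negative contribution from the curvature term $(\vn_k)_x\cdot(\vw''_0)_x$ exactly offsets the geometric $4k^2\pi^2\phi^2$ piece of $|(\vw'_0)_x|^2$, and without this cancellation a naive estimate would fail for large $k$. Once the reduction $g''(0)=2\int\phi_x^2\,dx$ is in hand, the Poincaré inequality closes the argument.
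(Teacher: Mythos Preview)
Your proof is correct and follows essentially the same route as the paper: both apply Theorem \ref{Strong sufficient Taheri}, parametrize $\vw'_0=\phi\,\vn_k^\perp$ with $\phi\in W^{1,2}_0(0,1)$, reduce the second variation to $2\int_0^1\phi_x^2\,dx$ via the cancellation you identify, and close with the Poincar\'{e} inequality. The only differences are cosmetic: you handle the cross term $(\vn_k)_x\cdot(\vw''_0)_x$ by a direct integration by parts (using $(\vn_k)_{xx}=-4k^2\pi^2\vn_k$), whereas the paper invokes the weak Euler--Lagrange equation \eqref{7.29} for the same purpose; and you extract the explicit constant $\gamma_k=2\pi^2/(\pi^2+1+4k^2\pi^2)$, whereas the paper obtains the lower bound $g''(0)\geqslant\gamma\|\vw'_0\|_{1,2}^2$ by a short contradiction/compactness argument.
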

 \begin{proof}
 
 We begin by noting that we could apply either Theorem \ref{Strong sufficiency} or Theorem \ref{Strong sufficient Taheri} to this problem as our domain $\Omega$ is of class $C^1$. We will use Theorem \ref{Strong sufficient Taheri} because it is a little simpler and has a slightly stronger conclusion. Our Lagrangian is smooth and convex with respect to the gradient so we know that \eqref{6.3} is satisfied. Hence in order to apply Theorem \ref{Strong sufficient Taheri} we just need to show the positivity of the second variation. We take some $\vv \in C_0^\infty \left( (0,1),\mathbb{R}^2\right)$ and for ease of notation, during this proof we will denote $\vw'_0$ simply by $\vw'$ and similarly $\vw''_0$ by $\vw''$. We remind the reader that they have the explicit form as given in \eqref{1.11}.
  
  \vspace{3 mm}
  
  We fix an index $k$. Since $\vw'' \in W^{1,1}_0(0,1)$ and $\vn_k \in W^{1,\infty}(0,1)$, they are an admissible pair of functions in \eqref{6.3} using a basic density argument. Using this fact, together with the easily verifiable identity $|\vw'|^2=-\vn_k\cdot \vw''$, the second variation simplifies as follows
  \begin{equation}\label{7.31}
  \begin{split}
   \left.\frac{ d^2}{d t^2} I \left( \frac{\vn_k+t \vv}{|\vn_k + t \vv|} \right)  \right|_{t=0} &= 2\int_0^1 |\nabla\vw'|^2+\nabla \vw'':\nabla \vn_k\,dx \\&= 2\int_0^1 |\nabla \vw'|^2+|\nabla \vn_k|^2 \left( \vw'' \cdot \vn_k \right) \,dx \\
   &=2\int_0^1 |\nabla \vw'|^2-|\nabla \vn_k|^2 |\vw'|^2 \,dx \\
   & =2\int_0^1 |\nabla \vw'|^2-4k^2\pi^2 |\vw'|^2\,dx.
  \end{split}
  \end{equation}
  To show this is positive we note that $\vw'\cdot \vn_k=0$, therefore we can denote $\vw'$ by
  \begin{equation}\label{7.32}
   \vw':=f(x)\left( \begin{array}{c} \sin(2k\pi x) \\ -\cos(2k\pi x ) \end{array} \right),
  \end{equation}
  for some $f \in W^{1,2}_0\left( 0,1 \right)$. Substituting this into \eqref{7.31} simplifies the expression to
  \begin{equation}\label{7.33}
  \begin{split}
   \left.\frac{ d^2}{d t^2} I \left( \frac{\vn_k+t \vv}{|\vn_k + t \vv|} \right)  \right|_{t=0} &=\int_0^1 f'(x)^2\,dx \\
   &\geqslant \pi^2 \int_0^1 f(x)^2\,dx \\
   & = \pi^2 \int_0^1 |\vw'|^2 \,dx.
  \end{split}
  \end{equation}
  
  The inequality above is simply a Poincar\'{e} inequality with the optimal constant for $W^{1,2}_0(0,1)$ \cite{dym1972fourier}. Now we know that \eqref{7.33} holds we proceed to prove that for some $\gamma>0$
  \begin{equation}\label{7.34}
   \left.\frac{ d^2}{dt^2} I \left( \frac{\vn_k+t\vv}{|\vn_k + t\vv|} \right)  \right|_{t=0} \geqslant \gamma|| \vw'||_{1,2}^2,
  \end{equation}
  for all test functions $\vv$. For a contradiction we suppose \eqref{7.34} does not hold, then we can argue in a similar fashion to the previous proof. For $j=1,2\dots$ there exist $\vv_j \in W^{1,2}_0(0,1)$, such that 
  \begin{equation}\label{7.35}
   ||\vw_j'||_{1,2}=1 \quad \forall \,j \quad \text{and} \quad \left.\frac{ d^2}{d t^2} I \left( \frac{\vn_k+t \vv_j}{|\vn_k + t \vv_j|} \right)  \right|_{t=0} \rightarrow 0 \quad \text{as} \quad j\rightarrow \infty.
  \end{equation}
  The estimate \eqref{7.33} tells us that 
  \begin{equation}\label{7.36}
   \int_0^1 |\vw_j'|^2 \,dx \rightarrow 0.
  \end{equation}
 However since the second variation converges to zero it is clear from \eqref{7.31} that we must also have 
 \begin{equation}\label{7.37}
  \int_0^1 |\nabla \vw_j'|^2\,dx \rightarrow 0. 
 \end{equation}
 Equations \eqref{7.36} and \eqref{7.37} clearly contradict the fact that $||\vw_j'||_{1,2}=1$ for all $j$. So the second variation is strictly positive around each $\vn_k$. Thus Theorem \ref{Strong sufficient Taheri} gives us the assertion.
 \qed
 \end{proof}

 \vspace{3 mm}
 
 This result can be intuitively grasped from the perspective of the topology of $\mathbb{S}^1$. This is the only sphere which is not simply connected so that each of these maps $\vn_k$ are not homotopic to each other. Thus there is no way of moving between these states without traversing a potential well of infinte energy. However similar results exist even if the domain is simply connected; Brezis \& Coron \cite{brezis1983large} and Jost \cite{jost1984dirichlet} both proved related results for maps into $\mathbb{S}^2$. These results all reinforce the idea that global minimizers for constrained problems is a difficult issue. Proving a global minimizer sufficiency result may not be possible for a general constrained problem without additional topological constraints on the manifold. We were able to circumvent this issue when examining local behaviour because the topology of the manifold is negated at an $L^\infty$ local level.

\vspace{3 mm}

In terms of the methods we have used, relating the constrained functional to an unconstrained one via a projection cannot be used straightforwardly to study global properties, or indeed $L^p$ local minimizers for $1\leqslant p<\infty$. This is because if we take an arbitrary $\vm\in\mathcal{B}$ we cannot perform our analysis unless $\vm$ is $L^\infty$ close to the manifold so that $P(\vm)$ is uniquely defined. One final remark about this paper is that although we considered the case of closed, bounded $C^4$ manifolds in $\mathbb{R}^k$, one could potentially go further. The condition we really required was a locally unique projection onto the manifold which was itself $C^3$. Therefore the analysis should all hold if the constraint is $\vn(x) \in M$ almost everywhere, and $M$ satisfies this condition.

\section*{Acknowledgements}
I would like to thank my supervisor Professor John Ball for his many helpful discussions and ideas which helped formulate this work. I would also like to thank Chris Newton and HP-Labs for introducing me to the study of cholesteric liquid crystals.

\bibliographystyle{plain}
\bibliography{refs}


\end{document}